\title{Minimal free resolutions for homogeneous ideals with Betti numbers $1,n,n,1$}
\author{Alfio Ragusa
	\and  Giuseppe Zappal\`a}
\subjclass[2010]{13 H 10, 14 N 20, 13 D 40}
\keywords{Graded minimal free resolutions, Graded Betti numbers, Structure Theorems, Homological dimension, Gorenstein ideals}
\DeclareSymbolFont{rsfscript}{OMS}{rsfs}{m}{n}
\DeclareSymbolFontAlphabet{\mathrsfs}{rsfscript}
\DeclareSymbolFont{AMSb}{U}{msb}{m}{n}
\DeclareSymbolFontAlphabet{\mathbb}{AMSb}
\DeclareSymbolFont{eufrak}{U}{euf}{m}{n}
\DeclareSymbolFontAlphabet{\gothic}{eufrak}
\def\ac{\`}
\newcommand{\f}{\footnotesize}
\newcommand\GCD{\operatorname{GCD}}
\newcommand{\pp}{\mathbb P}
\newcommand\hgt{\operatorname{ht}}
\newcommand\depth{\operatorname{depth}}
\newcommand\rank{\operatorname{rank}}
\newcommand\imm{\operatorname{Im}}
\newcommand\rw{\Rightarrow}
\newcommand\codim{\operatorname{codim}}
\newcommand\Imm{\operatorname{Im}}
\newcommand\zz{{\mathbb Z}}
\newcommand{\xr}{\xrightarrow}
\newcommand{\bo}{\bigoplus}
\DeclareMathOperator{\Coker}{Coker}
\DeclareMathOperator{\Ker}{Ker}
\newtheorem{thm}{Theorem}[section]
\newtheorem{lem}[thm]{Lemma}
\newtheorem{prp}[thm]{Proposition}
\newtheorem{cor}[thm]{Corollary}
\theoremstyle{definition}
\newtheorem{dfn}[thm]{Definition}
\theoremstyle{remark}
\newtheorem{rem}[thm]{Remark}
\newtheorem{exm}[thm]{Example}
\newcounter{num}
\begin{document}



\begin{abstract}
We investigate the standard generalized Gorenstein algebras of homological dimension three, giving a structure theorem for their resolutions. Moreover in many cases we are able to give a complete description of their graded Betti numbers.
\end{abstract}

\maketitle

\section*{Introduction}
\markboth{\it Introduction}{\it Introduction}

One of the most important tool for studying projective schemes is the minimal free resolution of their defining ideals. When the scheme has homological dimension $2,$ there is the Hilbert-Burch structure theorem which allows a deep knowledge of these schemes
(see \cite{B}). 
When we study schemes having homological dimension $3,$ the matter becomes a little intriguing even in the simplest case of the arithmetically Cohen Macaulay schemes. In this last case, when the rank of the last syzygy module 
is $1,$ the situation becomes analogous to the homological dimension $2$ case according to the structure theorem by Buchsbaum and Eisenbud (see \cite {BE}). These are the arithmetically Gorenstein schemes of codimension $3.$ 
\par
So it seems completely natural to study all the schemes of homological dimension $3$ such that the rank of the last syzygy module 
is $1,$ even for codimension less than $3.$ The goal of this paper is just to study such schemes, which we will call {\em generalized Gorenstein schemes} (of course we omit the trivial case when the codimension is $1$),  
\par
These schemes arise frequently in different contexts. For instance the disjoint union of two complete intersection curves in $\pp^3$ and some subschemes of star configurations (see \cite{RZ3}) are generalized Gorenstein schemes.
As in the Gorenstein case, the central map of the resolution is represented by a square matrix $M$ of submaximal rank, which is named {\em presentation matrix}. This matrix, once more, completely determines the resolution of the defining ideals.
\par
Therefore, in section $1,$ after studying properties of presentation matrices $n\times m,$ $n\le m$, we give a first  characterization of such matrices in Proposition \ref{carm}.
\par
Section $2$ is dedicated to producing a structure theorem for the resolutions of the generalized Gorenstein algebras of homological dimension $3$ and the main result on this direction is Theorem \ref{pres}. Moreover we study the special case in which the ideal associated to the last map in the resolution is minimally generated by $3$ elements. In the Propositions \ref{cn} and \ref{cninv} we give a complete characterization of such schemes and in Theorem \ref{sch} we give a nice geometrical description of these schemes.
\par
All these results permit us to study the graded Betti numbers for generalized Gorenstein schemes of homological dimension $3.$ The main result in section $3$  is in Theorem \ref{treB}, in which all the graded Betti numbers are characterized in the case in which the defining ideal is minimally generated by $n=3$ elements. The main results of section $4$ are in Theorem \ref{omogdisp},  where we give a complete description of the graded Betti sequences  for those schemes which  have $n$ generators and syzygies with concentrated degrees, with $n$ odd,  and in Proposition \ref{omogparisuff}  where some necessary conditions and some sufficient conditions are given in the even case.


\section{matrices of submaximal rank}
\markboth{\it Square matrices of submaximal rank}{\it Square matrices of submaximal rank}
Throughout the paper $k$ will be a field and $R:=k[x_1,\ldots,x_r],$ $r\ge 3,$ will be the standard graded polynomial $k$-algebra.
\par
The aim of this paper is to investigate graded minimal free resolutions of type 
   $$0\to R(-s)\to \bigoplus_{j=1}^nR(-b_j) {\rightarrow} \bigoplus_{i=1}^nR(-a_i)\to R.$$
We give the following definition.
\begin{dfn}\label{defmin} 
Let $M\in R^{n,m}$ be a matrix with $m\ge n.$ We say that $M$ is a {\em presentation matrix} if it is associated to a map $\varphi$ in
a presentation of type
 $$R^m\xrightarrow{\varphi}R^n\to R.$$
\end{dfn}
Note that if $M\in R^{n,m}$ is a presentation matrix, then $\rank M=n-1.$
\par
At first in this section we would like to study properties of presentation matrices.
\par
Let $S$ be a $UFD$ and let $H\in S^{n,n-1}.$ We set $g_i(H)=(-1)^{i+1}H_i$ where $H_i$ is the minor of $H$ obtained by deleting the $i$-th row of $H.$
The following lemmas will play a key role along the paper.

\begin{lem}\label{pol}
Let $S$ be a $UFD,$ $M\in S^{n,m},$ $n\le m,$ with $\rank M=n-1.$ 
Let $N$ and $N'$ be two submatrices of $M$ of size $n\times (n-1)$ of rank $n-1.$ Let $d_N=\GCD(g_1(N),\ldots,g_n(N))$ and
$d_{N'}=\GCD(g_1(N'),\ldots,g_n(N')).$ Then there exists a unit $a\in S$ such that
 $$\left(\frac{g_1(N)}{d_N},\ldots,\frac{g_n(N)}{d_N}\right)=a
 \left(\frac{g_1(N')}{d_{N'}},\ldots,\frac{g_n(N')}{d_{N'}}\right).$$ 
\end{lem}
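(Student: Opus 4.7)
The plan is to exhibit both vectors $g(N):=(g_1(N),\ldots,g_n(N))$ and $g(N'):=(g_1(N'),\ldots,g_n(N'))$ as generators of the (necessarily one-dimensional) kernel of $M^T$ over the fraction field $K$ of $S$, and then to promote the resulting proportionality to a unit relation using the UFD hypothesis.

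First I would record the basic Cramer-type identity $N^T\cdot g(N)=0$. This is immediate: for any column $v$ of $N$, the $n\times n$ matrix $[v\mid N]$ has a repeated column, so its determinant vanishes; expanding that determinant along the first column yields exactly $\sum_{i=1}^n v_i g_i(N)=0$. Hence $g(N)$ is orthogonal to each column of $N$, and the analogous statement holds for $g(N')$ and $N'$.

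Next I would upgrade this to $M^T g(N)=0$. Because $\rank M=n-1$, its column space over $K$ has dimension $n-1$; the $n-1$ columns of $N$ are $K$-linearly independent (since $\rank N=n-1$) and lie in that column space, so they form a $K$-basis of it. Every column of $M$ is therefore a $K$-linear combination of columns of $N$, and orthogonality with $g(N)$ propagates. The same argument gives $M^T g(N')=0$. Since $\ker M^T$ has $K$-dimension $n-(n-1)=1$, there exists $\lambda\in K$ with $g(N')=\lambda\, g(N)$.

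Set $h_i:=g_i(N)/d_N$ and $h'_i:=g_i(N')/d_{N'}$; by construction $\GCD(h_1,\ldots,h_n)=\GCD(h'_1,\ldots,h'_n)=1$ up to units. The relation above gives $h_i=\mu\, h'_i$ for all $i$, with $\mu\in K$. Writing $\mu=p/q$ in lowest terms in the UFD $S$, so that $\GCD(p,q)=1$, I obtain $q h_i=p h'_i$ for all $i$. Since $\GCD(p,q)=1$, $p$ divides each $h_i$ and therefore divides their GCD, which is a unit; so $p$ is a unit. Symmetrically $q$ is a unit, whence $\mu=a\in S^*$, and the stated equality follows.

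The only delicate point is the final UFD bookkeeping, where one must be careful to exploit the primitivity of both the $h_i$ and the $h'_i$ to conclude that the proportionality scalar is a unit rather than merely a nonzero element of $K$; all the conceptual content, however, is packaged in the one-dimensionality of $\ker M^T$ over $K$.
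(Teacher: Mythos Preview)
Your proof is correct and follows essentially the same approach as the paper's: both arguments observe that the two normalized vectors span the same one-dimensional subspace of $Q(S)^n$ (the left kernel of $M$, since $\rank M=n-1$), and then use the UFD hypothesis together with the primitivity of each vector to force the proportionality scalar to be a unit. Your version is more explicit in justifying the one-dimensionality via the Cramer identity $N^Tg(N)=0$ and the column-span argument, whereas the paper simply asserts $\langle v\rangle=\langle v'\rangle$ from the rank hypothesis, but the substance is the same.
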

\begin{proof}
Let $Q(S)$ be the field of fractions of $S.$  Let us consider
 $$v=\left(\frac{g_1(N)}{d_N},\ldots,\frac{g_n(N)}{d_N}\right)\in Q(S)^n,\,\,v'=\left(\frac{g_1(N')}{d_{N'}},\ldots,\frac{g_n(N')}{d_{N'}}\right)\in Q(S)^n,$$ 
then we have that $\langle v\rangle=\langle v'\rangle,$ as vector subspaces of $Q(S)^n,$ since $\rank M=n-1.$ Therefore $bv=b'v',$ with $b,b'\in S,$ $\GCD(b,b')=1,$ so $b$ is a divisor of each component of the vector $v',$ hence $b$ is unit. Analogously $b'$ is a unit too, so $v=av',$ with $a$ a unit.
\end{proof}

By Lemma \ref{pol}, using the same notation, we can set $g_i(M)=g_i(N)/d_N$ and we will set 
 $$\gamma(M)=(g_1(M),\ldots,g_n(M)).$$
Note that $\gamma(M)$ is determined up to a unit and $\GCD(\gamma(M))=1.$ So $\gamma(M)$ generates an ideal $I_M,$ with $\depth I_M\ge 2.$ In the sequel $I_M$ will be called {\em the ideal associated} to $M.$ Moreover $\gamma(M)$ defines a map
  $$\gamma(M):R^n\to R.$$
When $F$ and $G$ are graded free modules, $\rank F\ge\rank G$ and
$\varphi:F\to G,$ is a map associated to a matrix $M$ (with respect to some bases), with $\rank\varphi=\rank G-1,$
we set $\gamma(\varphi)=\gamma(M).$ So $\gamma(\varphi):G\to R.$ By construction we have
 $$\gamma(\varphi)\varphi=0.$$
\begin{cor}\label{sist}
With the same notation of Lemma \ref{pol}, if $(h_1, \ldots, h_n)M=0$ then $(h_1, \ldots, h_n)=\lambda \gamma(M)$ for some $\lambda\in S.$
\end{cor}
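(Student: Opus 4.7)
The plan is to identify $\gamma(M)$ as a generator of the rank-one left null space of $M$ over the fraction field $Q(S)$, and then descend from $Q(S)$ back to $S$ using the primitivity condition $\GCD(\gamma(M))=1$. The first step is to check that $\gamma(M)M=0$. Let $N$ be the $n\times(n-1)$ submatrix of $M$ used to define $\gamma(M)$. For any column $c$ of $M$, form the $n\times n$ matrix $(c\mid N)$: either $c$ already appears among the columns of $N$ and the determinant vanishes because two columns coincide, or $(c\mid N)$ is, up to a column-permutation sign, an $n\times n$ submatrix of $M$, whose determinant is zero since $\rank M=n-1$. Laplace expansion along the first column yields $\sum_{i=1}^n c_i g_i(N)=0$, and dividing by $d_N$ gives $c\cdot\gamma(M)=0$. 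Running this over all columns of $M$ proves $\gamma(M)M=0$.

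Next, let $h=(h_1,\ldots,h_n)$ satisfy $hM=0$ and pass to $Q(S)$. Since $\rank M=n-1$, the left null space of $M$ inside $Q(S)^n$ has dimension $1$, and $\gamma(M)$ is a nonzero element of it (nonzero because $N$ has rank $n-1$, so at least one $g_i(N)$ is nonzero). Therefore there exists $\alpha\in Q(S)$ with $h=\alpha\,\gamma(M)$.

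To descend back to $S$, I would write $\alpha=a/b$ in lowest terms with $\GCD(a,b)=1$. Then $bh_i=a g_i(M)$ in $S$ for every $i$, and coprimality of $a$ and $b$ forces $b$ to divide each $g_i(M)$, hence $b\mid\GCD(g_1(M),\ldots,g_n(M))=1$. So $b$ is a unit in $S$, $\alpha\in S$, and $\lambda:=\alpha$ is the required scalar. The only step that is not completely mechanical is the verification $\gamma(M)M=0$: keeping track of the sign conventions and isolating the degenerate case in which $c$ is one of the columns of $N$. Everything after that is the standard ``primitive vector'' argument, exploiting precisely the $\GCD$-normalization carried out in Lemma \ref{pol}.
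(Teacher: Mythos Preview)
Your proof is correct and follows essentially the same route as the paper: both arguments pass to the fraction field $Q(S)$, use that the left null space of $M$ is one-dimensional to write $b(h_1,\ldots,h_n)=a\,\gamma(M)$ with $\GCD(a,b)=1$, and then invoke $\GCD(\gamma(M))=1$ to force $b$ to be a unit. You supply more detail than the paper, in particular an explicit verification that $\gamma(M)M=0$ (which the paper records separately as the identity $\gamma(\varphi)\varphi=0$), but the underlying idea is identical.
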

\begin{proof} As in the proof of Lemma \ref{pol} $b (h_1, \ldots, h_n)=a \gamma(M)$ with $a,b \in S.$ Now since $\GCD(\gamma(M))=1$ we  see that $b$ divides $a$, so the conclusion follows.
\end{proof}

The following proposition deals with the special case when $I_M$ is perfect of height $2.$

\begin{prp}\label{gcd}
Let $S$ be a $UFD$ and $M\in S^{n,m},$ $n\le m,$ with $\rank M=n-1.$ Let us suppose that a submatrix $M'$ obtained taking $n-1$ columns of $M$ has rank $n-1$ and its maximal minors are coprime. Then $I_M$ is a perfect ideal of height two and each column of $M$ is in the module generated by the columns of $M'.$
\end{prp}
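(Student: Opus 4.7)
The plan is to extract both conclusions from a Hilbert-Burch style analysis of the submatrix $M'$. First I would note the key simplification: since the maximal minors of $M'$ are coprime, $d_{M'}=1$, so by Lemma \ref{pol} the vector $\gamma(M)$ agrees, up to a unit in $S$, with $(g_1(M'),\ldots,g_n(M'))$. In particular $I_M$ is precisely the ideal of maximal minors of $M'$; and because $\GCD(g_1(M'),\ldots,g_n(M'))=1$ in the UFD $S$, no prime element of $S$ divides all the $g_i(M')$, whence $I_M$ is contained in no height-one prime and $\depth I_M\ge 2$.

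The next step is to prove exactness of the complex
$$0\to S^{n-1}\xrightarrow{M'} S^n\xrightarrow{\gamma(M')} S\to S/I_M\to 0.$$
The composition $\gamma(M')\circ M'$ vanishes by construction (its entries are $n\times n$ determinants of $M'$ with one of its columns repeated), and $M'$ is injective since $S$ is a domain and $\rank M'=n-1$. The Buchsbaum-Eisenbud exactness criterion then reduces exactness to the rank equalities $\rank\gamma(M')=1$ and $\rank M'=n-1$ together with the depth condition $\depth I_M\ge 2$, all of which have just been verified. Exactness gives $\pd(S/I_M)\le 2$, and together with $\depth I_M\ge 2$ this forces $\pd(S/I_M)=2=\depth I_M$, so $I_M$ is perfect of height two.

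For the second claim, let $c$ be any column of $M$ and consider the $n\times n$ matrix $A=[M'\mid c]$. Since every column of $A$ is a column of $M$ and $\rank M=n-1$, we have $\det A=0$. Laplace expansion along the last column, combined with the sign convention defining the $g_i(M')$, yields up to an overall sign the identity $\sum_{i=1}^n c_i\,g_i(M')=0$. Thus the tuple $(c_1,\ldots,c_n)$ lies in the kernel of $\gamma(M')$, and by the exactness established in the previous step it lies in the image of $M'$; that is, $c$ is an $S$-linear combination of the columns of $M'$.

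The main obstacle is the exactness of the Hilbert-Burch-type complex in the middle step: this is the only place where the coprimality hypothesis is genuinely used (through $\depth I_M\ge 2$), and it is what converts the abstract syzygy relation produced by the $\det A=0$ calculation into a concrete statement about the column module of $M'$. Everything else reduces to determinant bookkeeping and verification of ranks.
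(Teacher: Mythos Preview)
Your proof is correct and follows essentially the same approach as the paper: both arguments recognize $M'$ as a Hilbert--Burch matrix for $I_M$, conclude perfection of height two, and then use that the columns of $M'$ generate the full first syzygy module of $\gamma(M')$ to absorb every column of $M$. The only difference is that the paper invokes the Hilbert--Burch theorem directly, whereas you unpack the exactness of the Hilbert--Burch complex via the Buchsbaum--Eisenbud criterion and verify $\depth I_M\ge 2$ from the coprimality hypothesis explicitly; your Laplace expansion step is also just a reproof of the already-established relation $\gamma(M)M=0$ restricted to one column.
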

\begin{proof}
By the hypothesis, $M'$ is a Hilbert-Burch matrix of $I_M=(g_1,\ldots,g_n),$ where the $g_i$'s are the maximal minors of $M'.$ So $I_M$ is perfect ideal of height two and the columns of $M'$ generate the syzygy module on $(g_1,\ldots,g_n).$ Since any column of $M$ is a syzygy on $(g_1,\ldots,g_n),$ the conclusion follows.
\end{proof}

\begin{rem}
In particular, when $M\in R^{n,m},$ $n\le m,$ is a presentation matrix with a submatrix $M'$ as in Proposition \ref{gcd}, then the presentation defined by $M$ is not minimal.
\end{rem}

\begin{prp}\label{carm}
Let $M\in R^{n,m},$ $m\ge n$ with $\rank M=n-1.$ Let $C(M)$ be the module generated by the columns of $M.$
\par
$M$ is a presentation matrix iff every syzygy on $\gamma(M)$ belongs to the module $C(M).$
\end{prp}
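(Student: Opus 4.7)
\medskip

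\noindent\textbf{Proof plan.}
The statement is an equivalence, so I would prove each direction separately, using Corollary~\ref{sist} for the forward direction and the identity $\gamma(\varphi)\varphi=0$ (noted just before the proposition) for the backward direction. The key observation to keep in mind throughout is that, by Definition~\ref{defmin}, saying $M$ is a presentation matrix means there exists a row vector $(f_1,\ldots,f_n)\in R^n$, defining the map $\psi:R^n\to R$, such that the sequence $R^m\xrightarrow{\varphi}R^n\xrightarrow{\psi}R$ is exact at $R^n$; equivalently, $\ker\psi = C(M)$. Since $\rank M=n-1<n$, the map $\psi$ cannot be zero.

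For the forward direction, assume $M$ is a presentation matrix with associated map $\psi$ given by $(f_1,\ldots,f_n)$. Then $\psi\varphi=0$ translates to $(f_1,\ldots,f_n)M=0$, so by Corollary~\ref{sist} there exists $\lambda\in R$ with $(f_1,\ldots,f_n)=\lambda\gamma(M)$. Since $(f_1,\ldots,f_n)\ne 0$ and $R$ is a domain, $\lambda\ne 0$, so multiplication by $\lambda$ is injective and the kernel of $\psi$ coincides with the syzygy module on $\gamma(M)$. By the presentation hypothesis this kernel equals $C(M)$, which is exactly the desired conclusion that every syzygy on $\gamma(M)$ lies in $C(M)$.

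For the converse, assume every syzygy on $\gamma(M)$ lies in $C(M)$. I would simply produce a presentation by setting $\psi:=\gamma(M):R^n\to R$. The identity $\gamma(\varphi)\varphi=0$ gives $\psi\varphi=0$, so $C(M)=\imm\varphi\subseteq\ker\psi$. Conversely $\ker\psi$ is by definition the syzygy module on $\gamma(M)$, which by hypothesis is contained in $C(M)$. Hence $\imm\varphi=\ker\psi$ and the sequence $R^m\xrightarrow{\varphi}R^n\xrightarrow{\psi}R$ exhibits $M$ as a presentation matrix.

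The argument is essentially a bookkeeping one; there is no real obstacle, since both technical inputs (Corollary~\ref{sist} and $\gamma(\varphi)\varphi=0$) are already in place. The only subtlety worth watching is ensuring in the forward direction that the scalar $\lambda$ relating $(f_1,\ldots,f_n)$ to $\gamma(M)$ is nonzero, so that the two syzygy modules actually coincide; this is guaranteed by the rank condition $\rank M=n-1<n$, which forces the map $\psi$ in any presentation to be nonzero.
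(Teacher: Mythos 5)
Your proof is correct and takes essentially the same route as the paper's: Corollary~\ref{sist} handles the forward direction, and the converse is obtained by exhibiting the complex $R^m\xrightarrow{\varphi}R^n\xrightarrow{\gamma(\varphi)}R$ and noting that the hypothesis gives $\Ker\gamma(\varphi)\subseteq\imm\varphi$. The only (harmless) difference is that you verify $\lambda\ne 0$ to get full equality of kernels, whereas the paper needs only the one inclusion that syzygies on $\gamma(M)$ are syzygies on $(f_1,\ldots,f_n)$.
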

\begin{proof}
If $M$ is a presentation matrix then there exists an exact complex
 \begin{equation}\label{mfr} R^m\stackrel{\varphi}{\rightarrow}R^n\stackrel{\psi}{\rightarrow}R,\end{equation}
where $\varphi$ is the map associated to $M.$ 
Let $(f_1, \ldots, f_n)=\Imm\psi.$ Then $$(f_1 \ldots f_n)M=0$$ and by Corollary \ref{sist} $(f_1, \ldots, f_n)=\lambda \gamma(M),$ for some $\lambda \in R.$ Thus, every syzygy on $\gamma(M)$ is a syzygy on $(f_1, \ldots, f_n)$ too. Since the complex (\ref{mfr}) is exact, we are done. 
\par
Vice versa, let us consider the following complex
\begin{equation}\label{mfr'}R^m\xrightarrow{\varphi}R^n\xrightarrow{\gamma(\varphi)}R,\end{equation}
where $\varphi$ is the map associated to $M.$
Since, by the hypothesis, $\Ker\gamma(\varphi)\subseteq\imm \varphi,$ $M$ is a presentation matrix. 
\end{proof}

\begin{exm}
Let $R=k[x,y,z,t]$ and let 
 $$M=\begin{pmatrix}y & -x & 0 & 0 \\ 0 & z & -y & 0 \\ 0 & 0 & t & -z\\-t & 0 & 0 & x\end{pmatrix}.$$
We have that $\gamma(M)=(zt,xt,xy,yz)=(x,z)\cap (y,t),$ so it is easy to verify that $M$ is a presentation matrix, whereas $M^T$ is not. In fact 
$\gamma(M^T)=(x,y,z,t),$ so it does not satisfy the hypothesis of Proposition \ref{carm}.
\par

\end{exm}


\section{The structure of the resolution}
\markboth{\it The structure of the resolution}{\it The structure of the resolution}

In this section we will deal with the case in which $M$ is a presentation square matrix of size $n\ge 3.$
Now we define the class of algebras which we will investigate along this paper.

\begin{dfn}\label{ggor} 
A graded standard $R$-algebra $R/I$ of homological dimension $3$ is called {\em generalized Gorenstein algebra} if the rank of the last syzygy module is $1$ in a its minimal graded free resolution.
\end{dfn}

\begin{thm}\label{ris} 
Let $M\in R^{n,n}$ be a presentation square matrix. Then $R/I_M$ is a generalized Gorenstein algebra whose a free resolution is
 \begin{equation}\label{ris1} 0\to R^*\xrightarrow{\gamma(\varphi^*)^*}R^n\xrightarrow{\varphi}R^n 
\xrightarrow{\gamma(\varphi)}R\to R/I_M\to 0,\end{equation}
where $\varphi$ is the map associated to the matrix $M.$
\end{thm}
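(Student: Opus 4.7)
The plan is to verify that the complex (\ref{ris1}) is exact; this simultaneously gives a free resolution of $R/I_M$ of length at most $3$ ending in a rank $1$ module, hence the generalized Gorenstein structure.

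First I would check that (\ref{ris1}) is a complex. The composition $\gamma(\varphi)\varphi=0$ is built into the construction of $\gamma$. Applying that same construction to $\varphi^*$ (which has matrix $M^T$) gives $\gamma(\varphi^*)\varphi^*=0$; dualizing and using the reflexivity of finitely generated free modules yields $\varphi\circ\gamma(\varphi^*)^*=0$.

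Exactness I would verify position by position, from right to left. Exactness at the rightmost $R$ is clear: $I_M=\imm\gamma(\varphi)$ by definition. Exactness at the middle $R^n$ is precisely Proposition \ref{carm}, since $M$ is a presentation matrix: every syzygy on $\gamma(M)$ lies in $C(M)=\imm\varphi$.

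The most delicate step is exactness at the first $R^n$. A column $v\in R^n$ lies in $\Ker\varphi$ iff $Mv=0$, equivalently iff $v^TM^T=0$. Since $M^T$ has size $n\times n$ and rank $n-1$, Corollary \ref{sist} applied to $M^T$ forces $v^T=\lambda\gamma(M^T)$ for some $\lambda\in R$, i.e. $v=\lambda\gamma(M^T)^T$. On the other hand $\varphi^*$ has matrix $M^T$, so $\gamma(\varphi^*)$ is the row map $R^n\to R$ given by $\gamma(M^T)$; its dual $\gamma(\varphi^*)^*\colon R\to R^n$ sends $1\mapsto\gamma(M^T)^T$. Hence $\Ker\varphi=R\cdot\gamma(M^T)^T=\imm\gamma(\varphi^*)^*$. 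Exactness at the leftmost $R$ then follows at once: $\gamma(\varphi^*)^*$ is multiplication by $\gamma(M^T)^T$, whose components have $\GCD=1$ and so are nonzero, and $R$ is a domain.

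From this one concludes that $R/I_M$ has a free resolution of length at most $3$ with terminal free module of rank $1$, which yields the generalized Gorenstein property (with the understanding that the homological dimension is actually $3$ in the nondegenerate situations). The main obstacle I expect is the bookkeeping in the last exactness step: one must carefully identify the dual map $\gamma(\varphi^*)^*$ in terms of the matrix $M^T$ and match it with $\Ker\varphi$, navigating the row/column and dual/transpose conventions.
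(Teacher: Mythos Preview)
Your proof is correct and follows essentially the same route as the paper: both use Proposition \ref{carm} for exactness at the middle $R^n$ and Corollary \ref{sist} applied to $M^T$ (equivalently to $\varphi^*$) for exactness at the left $R^n$. The only cosmetic difference is that for injectivity of $\gamma(\varphi^*)^*$ the paper cites an external result (Lemma 2.6 in \cite{RZ2}) to conclude that $\Ker\varphi$ is free of rank $1$, whereas you argue directly that the defining vector is nonzero and $R$ is a domain; your version is self-contained and slightly cleaner on this point.
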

\begin{proof} 
Since $M$ is a presentation matrix, using Proposition \ref{carm}, we get $\Imm\varphi=\Ker\gamma(\varphi).$
Furthermore $\gamma(\varphi^*)\varphi^*=0$ implies that $\varphi\gamma(\varphi^*)^*=0.$
Now let $u\in\Ker\varphi.$ It defines a map $f_u:R\to R^n.$ Therefore we have that $\varphi f_u=0,$ hence $f_u^*\varphi^*=0.$
Using Corollary \ref{sist} we deduce that $f_u^*=a\gamma(\varphi^*),$ with $a\in R,$ consequently $f_u=a\gamma(\varphi^*)^*,$
i.e. $u\in\Imm\gamma(\varphi^*)^*.$
\par
Finally, by Lemma 2.6 in \cite{RZ2}, since $\rank\varphi=n-1,$ $\Ker\varphi$ is a free module of rank $1.$ 
\end{proof}

\begin{dfn}
Whenever the resolution of Theorem \ref{ris} is minimal, we say that $M$ is a {\em minimal presentation matrix}.
\end{dfn}

\begin{rem} If $M\in R^{n,n}$ is a presentation matrix then $\hgt I_M $ is either $2$ or $3.$ When $\hgt I_M =3$, $I_M$ is a Gorenstein ideal. Consequently if $n$ is even, then $\hgt I_M=2.$
\end{rem}

\begin{rem} If $M\in R^{n,n}$ is a presentation matrix, by the exactness of the complex (\ref{ris1}), then $\depth(\gamma(M^T))\ge 3$ and $\depth(I_{n-1}(M))\ge 2,$ by the exactness criterion in \cite{BE2}.
\end{rem}

Using The Theorem \ref{ris} we are able to characterize all the standard generalized Gorenstein algebras 
of homological dimension $3.$

\begin{cor}
Let $I$ be an ideal of $R.$ Then $R/I$ is a generalized Gorenstein algebra of homological dimension $3$ iff there exists a presentation matrix $M$ such that $I=I_M.$
\end{cor}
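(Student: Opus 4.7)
The plan is to prove both implications, the ``if'' direction being almost a restatement of Theorem \ref{ris} and the ``only if'' direction doing the real work by extracting the matrix $M$ from the minimal free resolution of $R/I$.

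For the ``if'' direction I would simply appeal to Theorem \ref{ris}: a presentation matrix $M$ with $I=I_M$ yields the exact complex (\ref{ris1}), which is a length-three free resolution of $R/I$ whose tail is $R(-s)$, so the last syzygy module is free of rank one. The remark following Theorem \ref{ris} (which pins down $\hgt I_M\in\{2,3\}$), combined with the Auslander--Buchsbaum formula, ensures that the homological dimension of $R/I$ is exactly $3$, so the definition of a generalized Gorenstein algebra is met.

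For the converse, I would start from the minimal graded free resolution
$$0\to R(-s)\xrightarrow{\psi} F_2\xrightarrow{\varphi} F_1\xrightarrow{\pi} R\to R/I\to 0.$$
Because $R/I$ is torsion, the alternating sum of ranks yields $\rank F_1=\rank F_2=:n$, and since $\ker\varphi=\Imm\psi$ has rank one we get $\rank\varphi=n-1$. Letting $M$ be the matrix of $\varphi$, Definition \ref{defmin} immediately identifies $M$ as a presentation matrix. Writing $\pi=(f_1,\ldots,f_n)$ so that $I=(f_1,\ldots,f_n)$, the complex relation $\pi\varphi=0$ combined with Corollary \ref{sist} gives $(f_1,\ldots,f_n)=\lambda\,\gamma(M)$ for some $\lambda\in R$, whence $I=\lambda\,I_M$.

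The main obstacle is the last step, showing that $\lambda$ is a unit. Since $\GCD(\gamma(M))=1$ by construction, $\lambda$ is precisely $\GCD(f_1,\ldots,f_n)$; if $\lambda$ were a non-unit we would have $I\subseteq(\lambda)$ and therefore $\hgt I=1$, which is ruled out by the convention declared in the introduction, where the trivial codimension-one case is explicitly omitted. Hence $\lambda\in k^*$ and $I=I_M$.
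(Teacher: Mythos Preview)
Your argument follows the paper's route: both directions rest on Theorem~\ref{ris}. For the converse the paper is terser---it notes that once $M$ is a presentation matrix, Theorem~\ref{ris} identifies $\Coker(M)$ with $I_M$, while the given resolution identifies $\Coker(M)$ with $I$, whence $I=I_M$. Your use of Corollary~\ref{sist} together with the explicit unit argument (via the standing convention $\hgt I\ge 2$) is exactly this identification spelled out, so nothing is lost or gained.

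One caveat on the ``if'' direction: the Auslander--Buchsbaum step does not by itself force $\pd(R/I_M)=3$. When $\hgt I_M=2$, Auslander--Buchsbaum together with the length-three resolution (\ref{ris1}) only yields $\pd\in\{2,3\}$; the perfect height-two case is not excluded by that inequality alone. The paper avoids this by invoking Theorem~\ref{ris} as stated, since its conclusion already asserts that $R/I_M$ is a generalized Gorenstein algebra, hence of homological dimension exactly~$3$. You should do the same and simply drop the Auslander--Buchsbaum justification.
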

\begin{proof}
Theorem \ref{ris} says that every $I_M$ associated to a presentation matrix is a generalized Gorenstein ideal. Conversely, if $I$ is a 
generalized Gorenstein ideal in $R$ of homological dimension $3$ then there is a resolution of $R/I$ of type 
$$0\to R\to R^n\xr{M}R^n\to R \to R/I \to 0.$$
So $M$ is a presentation matrix hence, again by Theorem \ref{ris}, $\Coker (M) =I_M.$ From which we get $I=I_M.$
\end{proof}

Thus it is important to recognize when $M$ is a presentation matrix. The next results will give another characterization of such matrices.

\begin{lem}\label{lcof}
Let $M\in R^{n,n}$ be a matrix of rank $n-1.$ We set $\gamma(M)=(g_1,\ldots,g_n)$ and $\gamma(M^T)=(h_1,\ldots,h_n).$
Let $M^C$ be the cofactor matrix of $M.$
Then $M^C=u(g_ih_j),$ where $u\in R.$
\end{lem}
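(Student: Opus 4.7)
The plan is to extract the desired factorization from the adjugate identity. Since $\rank M = n-1$ forces $\det M = 0$, we have $M \cdot \operatorname{adj}(M) = \operatorname{adj}(M) \cdot M = \det(M) I = 0$. Recalling that $\operatorname{adj}(M) = (M^C)^T$, this yields two vanishing identities: $M(M^C)^T = 0$ and $(M^C)^T M = 0$.

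First I would unpack $(M^C)^T M = 0$ column-wise. For each fixed $j$, the row vector made of the entries of the $j$-th column of $M^C$ annihilates $M$ on the left, so by Corollary \ref{sist} it is a scalar multiple of $\gamma(M) = (g_1,\ldots,g_n)$. This produces $\lambda_j \in R$ with $(M^C)_{ij} = \lambda_j g_i$ for all $i$. Next, I would read off $M(M^C)^T = 0$ row-wise: for each fixed $i$, the $i$-th row $\mathbf{r}_i$ of $M^C$ satisfies $\mathbf{r}_i M^T = 0$, so Corollary \ref{sist} applied to $M^T$ (whose rank is also $n-1$) yields $\mu_i \in R$ with $(M^C)_{ij} = \mu_i h_j$.

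Equating the two expressions gives $\lambda_j g_i = \mu_i h_j$ for all $i,j$. Because $\gcd(g_1,\ldots,g_n) = 1$ by construction of $\gamma$, some $g_{i_0}$ is nonzero, and in the fraction field $Q(R)$ I can set $u := \mu_{i_0}/g_{i_0}$; the cross relation then gives $\lambda_j = u h_j$ for every $j$, and hence $(M^C)_{ij} = \lambda_j g_i = u g_i h_j$ in $Q(R)$.

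The main obstacle is confirming that $u$ actually lies in $R$, not merely in the fraction field. Here I would invoke the coprimality $\gcd(h_1,\ldots,h_n) = 1$, which holds by the same definition of $\gamma$: writing $u = a/b$ in lowest terms, the relation $u h_j = \lambda_j \in R$ for every $j$ forces $b$ to divide each $h_j$, so $b$ divides $\gcd(h_j) = 1$, meaning $b$ is a unit and $u \in R$. This completes the factorization $M^C = u(g_i h_j)$.
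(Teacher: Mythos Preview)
Your proof is correct and follows essentially the same route as the paper: both use the adjugate identities together with Corollary \ref{sist} to obtain $c_{ij}=\lambda_j g_i=\mu_i h_j$, and then exploit $\GCD(g_i)=\GCD(h_j)=1$ to extract a single element $u\in R$. The only cosmetic difference is that the paper stays in $R$ throughout (arguing $h_j\mid\lambda_j$ and $g_i\mid\mu_i$ directly), whereas you pass to $Q(R)$ and then pull $u$ back into $R$; the content is the same.
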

\begin{proof}
Let us consider the following complex
 $$0\to R\xr{(h_1\ldots h_n)^T}R^n\xr{M}R^n\xr{(g_1\ldots g_n)}R.$$
We denote by $c_{ij}$ the cofactor of $M$ in the position $(i,j).$
Since $(g_1\ldots g_n)M=0$ and $(c_{1j}\ldots c_{nj})M=0,$ by Corollary \ref{sist} we get
 $$(c_{1j},\ldots, c_{nj})=\lambda_j(g_1,\ldots, g_n).$$
On the other hand since $(h_1\ldots h_n)M^T=0$ and $(c_{i1}\ldots c_{in})M=0,$ again by Corollary \ref{sist} we get
 $$(c_{i1},\ldots, c_{in})=\mu_i(h_1,\ldots, h_n).$$
By these equalities follows that
 $$\lambda_j(g_1,\ldots g_n)=h_j(\mu_1,\ldots,\mu_n);\,\,\mu_i(h_1,\ldots, h_n)=g_i(\lambda_1,\ldots,\lambda_n).$$
Since $\GCD(g_1,\ldots g_n)=\GCD(h_1,\ldots h_n)=1,$ we obtain that $$c_{ij}=\alpha_jh_jg_i=\beta_ig_ih_j,$$ for some 
$\alpha_j,\beta_i\in R,$ so $\alpha_j=\beta_i$ for every $i$ and $j.$ Let 
 $$u=\alpha_1=\ldots=\alpha_n=\beta_1=\ldots=\beta_n,$$ thus $c_{ij}=ug_ih_j.$
\end{proof}

\begin{thm}\label{pres}
Let $M\in R^{n,n}$ be a matrix of rank $n-1.$ Let $\gamma(M)=(g_1,\ldots,g_n),$ $\gamma(M^T)\!=\!(h_1,\ldots,h_n)$ and let
$J$ be the ideal generated by $\gamma(M^T).$ Let $M^C$ be the cofactor matrix of $M.$
\par
The matrix $M$ is a presentation matrix iff $\depth J\ge 3$ and $M^C=u(g_ih_j)$ where $u$ is a unit.
\end{thm}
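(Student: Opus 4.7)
The plan is to prove both implications by applying the Buchsbaum--Eisenbud exactness criterion \cite{BE2} to the complex
$$0\to R\xrightarrow{\gamma(M^T)^T}R^n\xrightarrow{M}R^n\xrightarrow{\gamma(M)}R,$$
which is in any case a complex, since $\gamma(M)M=0$ and $M\gamma(M^T)^T=0$ hold by construction of $\gamma$.

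For $(\Rightarrow)$, assume $M$ is a presentation matrix. By Theorem \ref{ris} the displayed complex is exact, so the Buchsbaum--Eisenbud rank and depth conditions automatically hold; in particular, as already observed in the remark preceding the statement, $\depth J\ge 3$ and $\depth I_{n-1}(M)\ge 2$. Lemma \ref{lcof} supplies a factorisation $M^C=u(g_ih_j)$ for some $u\in R$, so what remains is to check that $u$ is a unit. If it were not, $u$ would lie in some height one prime $\prm$, and then every cofactor $c_{ij}=ug_ih_j$ would also lie in $\prm$, forcing $I_{n-1}(M)\subseteq\prm$. This would contradict $\hgt I_{n-1}(M)=\depth I_{n-1}(M)\ge 2$, where the equality uses Cohen--Macaulayness of the polynomial ring $R$.

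For $(\Leftarrow)$, assume $\depth J\ge 3$ and that $u$ is a unit; I verify the Buchsbaum--Eisenbud hypotheses for the same complex. The rank conditions are immediate: $\rank\gamma(M)=\rank\gamma(M^T)^T=1$ (the $\GCD=1$ condition ensures the ideals are nonzero), $\rank M=n-1$, and $1+(n-1)=n$. For the depths, $\hgt I_M\ge 2$ because $\GCD(g_1,\dots,g_n)=1$ excludes any principal prime from containing $I_M$, and $\hgt J\ge 3$ by hypothesis. Moreover, because $u$ is a unit, the cofactor identity gives $I_{n-1}(M)=(ug_ih_j)_{i,j}=I_M\cdot J$, whose height is $\min(\hgt I_M,\hgt J)\ge 2$. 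Translating heights to depths via Cohen--Macaulayness of $R$ yields $\depth I_M\ge 1$, $\depth I_{n-1}(M)\ge 2$, and $\depth J\ge 3$, so Buchsbaum--Eisenbud delivers exactness of the complex. Every syzygy of $\gamma(M)$ then lies in $C(M)$, and $M$ is a presentation matrix by Proposition \ref{carm}.

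The main obstacle is the unit assertion in $(\Rightarrow)$: one has to pass from the depth bound on $I_{n-1}(M)$ coming from exactness of the resolution to the impossibility that $u$ belongs to any height one prime. Once this is in hand, and the identification $I_{n-1}(M)=I_M\cdot J$ via Lemma \ref{lcof} has been noted, the rest amounts to routine Buchsbaum--Eisenbud bookkeeping together with the fact that heights and grades coincide in a polynomial ring.
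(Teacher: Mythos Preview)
Your proof is correct and follows essentially the same route as the paper: both directions are handled via the Buchsbaum--Eisenbud exactness criterion applied to the complex $0\to R\to R^n\xrightarrow{M}R^n\to R$, together with Lemma~\ref{lcof} to identify $I_{n-1}(M)$ with $uI_MJ$. The only minor difference is in the verification of $\depth I_{n-1}(M)\ge 2$ for $(\Leftarrow)$: the paper builds an explicit regular sequence $(f,gh)$ in $I_MJ$ from regular sequences $(f,g)$ in $I_M$ and $(f,h)$ in $J$, whereas you invoke $\hgt(I_MJ)=\min(\hgt I_M,\hgt J)$ and translate via Cohen--Macaulayness of $R$---both arguments are valid, yours being a touch more conceptual and the paper's working without appeal to the CM property.
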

\begin{proof}
We set $I=(g_1,\ldots,g_n).$
If $M$ is a presentation matrix then by Theorem \ref{ris} the complex
 \begin{equation}\label{Mcompl} 0\to R\xr{(h_1\ldots h_n)^T}R^n\xr{M}R^n\xr{(g_1\ldots g_n)}R \end{equation}
is exact, so by the Buchsbaum-Eisenbud criterion, we get $\depth J\ge 3$ and $\depth I_{n-1}(M)\ge 2.$ Note that $I_{n-1}(M)$ is generated by the entries of $M^C,$ so by Lemma \ref{lcof}, $I_{n-1}(M)=uIJ,$ consequently $u$ is a unit. 
\par
Conversely let us suppose that $\depth J\ge 3$ and $M^C=u(g_ih_j)$ where $u$ is a unit. To show that $M$ is a presentation matrix it is enough to show that the complex (\ref{Mcompl}) is exact. To do this, we will use the Buchsbaum-Eisenbud criterion.
The conditions about the ranks of the modules are trivially satisfied. We have that $\depth I\ge 2,$ by definition of $\gamma(M),$ and 
$\depth J\ge 3$ by the hypothesis. It remains only to prove that $\depth I_{n-1}(M)\ge 2.$ Since $M^C=u(g_ih_j)$ where $u$ is a unit, we have that $I_{n-1}(M)=IJ.$ Let $f\in IJ,$ $f\ne 0.$ Since $\depth I\ge 2$ and 
$\depth J\ge 3,$ there exist $g\in I$ and $h\in J$ such that $(f,g)$ and $(f,h)$ are regular sequences. Hence $(f,gh)$ is a regular sequence in $IJ$ and we are done.
\end{proof}

\begin{rem}\label{pfaff}
Let $M$ be an alternating matrix of odd size $n$ and of rank $n-1.$ 
Then $\gamma(M)=\gamma(M^T)$ and $M^C=(p_ip_j),$ where $p_h$ is the $h$-th submaximal pfaffian of $M.$
Let $P_M$ be the ideal generated by the submaximal pfaffians of $M.$ Suppose that $\depth P_M=3.$ Then $M$ is a presentation matrix and $P_M=I_M.$ So Theorem \ref{pres} allows us to recover the well known characterization of the Gorenstein ideals of heighth $3$ of Buchsbaum and Eisenbud \cite{BE}.
\end{rem}

\begin{rem}\label{zeroh}
By Theorem \ref{pres}, let $M$ be a presentation matrix and let $(h_1,\ldots,h_n)=\gamma(M^T).$ Then $h_j=0$ iff the submatrix obtained from $M$ by removing the $j$-th column has not maximal rank.
\end{rem}

The following propositions put into relation a presentation square matrix $M$ with the kernel of the associated map.
\begin{prp}\label{dual}
Let $M\in R^{n,n}$ be a presentation square matrix and let $\varphi:R^n\to R^n$ be the associated map. 
Let $J=\Imm\gamma(\varphi^*).$ Let us suppose that
 $$R^m\xrightarrow{\psi} R^{n*}\xrightarrow{\gamma(\varphi^*)} R$$
is a presentation of $J.$ Then there exists a map $\beta:R^{m*}\to R^n,$ such that $\varphi=\beta\psi^*.$
\end{prp}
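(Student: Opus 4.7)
The plan is to construct $\beta$ by first lifting $\varphi^{*}$ through $\psi$ and then dualizing the result. From the general identity $\gamma(\varphi^{*})\varphi^{*}=0$ (which is the relation ``$\gamma(M)M=0$'' noted before Corollary~\ref{sist}, applied to $M^{T}$ in place of $M$), we immediately get $\Imm\varphi^{*}\subseteq\Ker\gamma(\varphi^{*})$. The hypothesis that $R^{m}\xrightarrow{\psi}R^{n*}\xrightarrow{\gamma(\varphi^{*})}R$ is a presentation of $J=\Imm\gamma(\varphi^{*})$ is used exactly to upgrade this to $\Ker\gamma(\varphi^{*})=\Imm\psi$, so that $\Imm\varphi^{*}\subseteq\Imm\psi$. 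Since $R^{n}$ is free, in particular projective, the map $\varphi^{*}\colon R^{n}\to R^{n*}$ lifts along $\psi$: there exists $\alpha\colon R^{n}\to R^{m}$ with $\psi\alpha=\varphi^{*}$.

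Next I dualize. Applying $\operatorname{Hom}_{R}(-,R)$ to the identity $\psi\alpha=\varphi^{*}$ yields $\alpha^{*}\psi^{*}=\varphi^{**}$. All the modules in sight, namely $R^{n}$, $R^{n*}$, $R^{m}$, $R^{m*}$, are finitely generated and free, so the canonical biduality map is an isomorphism and $\varphi^{**}$ is identified with $\varphi$. Setting $\beta:=\alpha^{*}\colon R^{m*}\to R^{n}$ therefore gives the desired factorization $\varphi=\beta\psi^{*}$.

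The argument is essentially formal and I do not expect a real obstacle; the only subtle point is that the inclusion $\Imm\varphi^{*}\subseteq\Ker\gamma(\varphi^{*})$ by itself is not strong enough — it would produce a factorization of $\varphi^{*}$ only through the submodule inclusion $\Ker\gamma(\varphi^{*})\hookrightarrow R^{n*}$, not through $\psi$. The presentation hypothesis is invoked precisely to ensure the equality $\Ker\gamma(\varphi^{*})=\Imm\psi$, after which projectivity of $R^{n}$ provides the lift $\alpha$ and biduality on finite free modules turns $\alpha$ into the required $\beta$.
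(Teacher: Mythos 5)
Your proof is correct and follows essentially the same route as the paper's: establish $\Imm\varphi^{*}\subseteq\Ker\gamma(\varphi^{*})=\Imm\psi$, lift $\varphi^{*}$ through $\psi$ to get $\alpha$ with $\psi\alpha=\varphi^{*}$, and dualize to obtain $\beta=\alpha^{*}$. You merely make explicit the projectivity and biduality steps that the paper leaves implicit.
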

\begin{proof}
At first we dualize the resolution of Theorem \ref{ris}. We get the complex
 $$0\to R^*\xrightarrow{\gamma(\varphi)^*}R^{n*}\xrightarrow{\varphi^*}R^{n*}
\xrightarrow{\gamma(\varphi^*)}R.$$
By the exactness of the presentation we get the factorization $\varphi^*=\psi\alpha$ for a suitable $\alpha:R^{n*}\to R^m.$
Consequently we have $\varphi=\alpha^*\psi^*,$ so $\alpha^*$ is the required map $\beta.$
\end{proof}

\begin{prp}\label{dual-2}
Let $M\in R^{n,n}$ be a presentation square matrix and let $\varphi:R^n\to R^n$ be the associated map. 
Let us suppose that $\gamma(M^T)=(h_1,\ldots,h_t,0,\ldots,0).$ Let $J=(h_1,\ldots,h_t).$ Take a presentation of $J$
 \begin{equation}\label{pres-2} R^m\xrightarrow{\psi} R^{t^*}\xrightarrow{\tau} R \end{equation}
where $\tau(e_i^*)=h_i,$ for $1\le i\le t.$
Then there exist two maps $\beta:R^{m*}\to R^n$ and $\delta:R^{n-t}\to R^n$ such that 
$\varphi=(\beta\psi^*)\oplus\delta.$
\end{prp}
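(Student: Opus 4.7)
The plan is to adapt the dualization argument from Proposition \ref{dual}, carefully keeping track of the zero tail of $\gamma(M^T)$ so that only the first $t$ coordinates factor through the given presentation of $J$.

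First, I would dualize the resolution of Theorem \ref{ris} to obtain the complex
$$0 \to R^* \xrightarrow{\gamma(\varphi)^*} R^{n*} \xrightarrow{\varphi^*} R^{n*} \xrightarrow{\gamma(\varphi^*)} R.$$
Since $h_j=0$ for $j>t$, the map $\gamma(\varphi^*)$ factors as $\gamma(\varphi^*) = \tau \circ \pi$, where $\pi: R^{n*} \to R^{t^*}$ is the projection onto the first $t$ coordinates and $\tau$ is the map appearing in (\ref{pres-2}).

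Next, I would split the target of $\varphi^*$ as $R^{n*} = R^{t^*} \oplus R^{(n-t)^*}$ and write $\varphi^* = \iota_1 \varphi^*_1 + \iota_2 \varphi^*_2$ with the $\iota_j$ the canonical inclusions and $\varphi^*_1:R^{n*}\to R^{t^*}$, $\varphi^*_2:R^{n*}\to R^{(n-t)^*}$ the two components. The relation $\gamma(\varphi^*)\varphi^* = 0$ (which holds by construction of $\gamma$) collapses to $\tau \varphi^*_1 = 0$, so $\imm \varphi^*_1 \subseteq \Ker \tau = \imm \psi$ by the exactness of (\ref{pres-2}). Since $R^{n*}$ is free, $\varphi^*_1$ lifts through $\psi$: there exists $\alpha:R^{n*}\to R^m$ with $\varphi^*_1 = \psi \alpha$. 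The second component $\varphi^*_2$ is entirely unconstrained, since no condition is imposed on it by $\gamma(\varphi^*)$.

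Finally, I would dualize back and use the canonical identifications $R^{k**}\cong R^k$, under which $\iota_1^*$ and $\iota_2^*$ become the projections $R^n\to R^t$ and $R^n\to R^{n-t}$ respectively. Setting $\beta := \alpha^* : R^{m*}\to R^n$ and $\delta := (\varphi^*_2)^* : R^{n-t}\to R^n$, the identity $\varphi = (\varphi^*)^* = \alpha^*\psi^*\iota_1^* + (\varphi^*_2)^*\iota_2^*$ reads $\varphi = (\beta\psi^*) \oplus \delta$ under the splitting $R^n = R^t \oplus R^{n-t}$ of the source. There is no real obstacle here beyond the bookkeeping of the decomposition across the dualization; the argument is a direct extension of the one used for Proposition \ref{dual}, the only new ingredient being the splitting off of the trivial coordinates $\delta$ corresponding to the vanishing tail of $\gamma(M^T)$.
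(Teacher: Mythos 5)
Your proof is correct and takes essentially the same route as the paper's: both arguments come down to the single relation $\tau\varphi_1^*=0$ (you obtain it from $\gamma(\varphi^*)\varphi^*=0$ after dualizing the whole resolution and factoring $\gamma(\varphi^*)$ through the projection onto the first $t$ coordinates, while the paper reads $\varphi_1\tau^*=0$ directly off the resolution written with the splitting $R^t\oplus R^{n-t}$ built in), after which one lifts $\varphi_1^*$ through $\psi$ using freeness of $R^{n*}$ and dualizes back, the complementary component $\delta$ being unconstrained. The difference is only in the order of the bookkeeping, not in substance.
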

\begin{proof}
By the hypotheses $R/I_M$ has a resolution of the type
 $$0\to R^*\xr{(\tau^*,0)}R^{t}\oplus R^{n-t}\xr{\varphi=\varphi_1\oplus\delta}R^n\to R\to R/I_M\to 0.$$
Consequently we have $\varphi_1\tau^*=0,$ so we get the complex
 $$R^{n*}\xr{\varphi_1^*}R^{t^*}\xr{\tau} R.$$
By the exactness of (\ref{pres-2}), we get the factorization $\varphi_1^*=\psi\alpha$ 
for a suitable $\alpha:R^{n*}\to R^m,$ so $\varphi_1=\alpha^*\psi^*$ and $\varphi=(\alpha^*\psi^*)\oplus\delta.$

\end{proof}


The next results will be useful for studying generalized Gorenstein algebras.

\begin{lem}\label{zeri}
Let $M$ be a minimal presentation matrix and let $J$ be the ideal generated by $\gamma(M^T).$ Then there exists a resolution of $I_M$ of the type
 $$0\to R\xr{\rho}R^n\to R^n\to R\to R/I_M\to 0$$
such that $\rho(1)=(h_1,\ldots,h_s,0,\ldots,0)$ where $h_1,\ldots,h_s$ minimally generate $J.$
\end{lem}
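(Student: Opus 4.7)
The plan is to begin with the resolution provided by Theorem \ref{ris},
\[
0 \to R \xr{\gamma(\varphi^*)^*} R^n \xr{\varphi} R^n \xr{\gamma(\varphi)} R \to R/I_M \to 0,
\]
where $\gamma(\varphi^*)^*(1) = (h_1, \ldots, h_n)^T$, and to modify it by a graded change of basis on the middle free module (the target of $\gamma(\varphi^*)^*$ and the source of $\varphi$) so that the first differential takes the required form while the sequence remains exact.

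Since $\{h_1, \ldots, h_n\}$ generates $J$, by Nakayama's lemma in the graded setting one can extract a subset of size $s$ (equal to the minimal number of generators of $J$) that minimally generates $J$. After permuting the basis of the middle $R^n$ (a graded isomorphism rearranging the free summands), I may assume these are $h_1, \ldots, h_s$. For each $j > s$, I write $h_j = \sum_{i=1}^s c_{ij} h_i$ with $c_{ij} \in R$; comparing degrees makes each $c_{ij}$ homogeneous of the correct shift. Setting
\[
U = \begin{pmatrix} I_s & 0 \\ C & I_{n-s} \end{pmatrix}, \qquad C_{j-s,\,i} = c_{ij},
\]
gives an invertible graded endomorphism of the middle $R^n$ with $U^{-1}(h_1, \ldots, h_n)^T = (h_1, \ldots, h_s, 0, \ldots, 0)^T$, as a direct block computation shows.

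Applying $U$ as change of basis replaces $\gamma(\varphi^*)^*$ by the map $\rho$ with $\rho(1) = (h_1, \ldots, h_s, 0, \ldots, 0)^T$ and replaces $\varphi$ by the map with matrix $MU$, while leaving $\gamma(\varphi)$ untouched; since $U$ is invertible, the new complex is isomorphic to the original exact one and so remains a resolution of $R/I_M$ of the asserted form. The main point requiring care is that $U$ be a graded map: this rests on the observation that in the graded setting a minimal generating subset of the homogeneous ideal $J$ can be extracted from $\{h_1, \ldots, h_n\}$ in such a way that, for each $j > s$, $h_j$ is a homogeneous $R$-linear combination of $h_1, \ldots, h_s$ whose coefficients automatically carry the degrees required for $U$ to preserve the grading on the summands of the middle $R^n$.
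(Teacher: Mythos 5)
Your proof is correct and follows essentially the same route as the paper: the paper also changes basis on the middle free module (target of $\rho$, source of $\varphi$) by a unipotent homogeneous substitution $v_i=e_i+a_ie_n$ to kill one redundant component of $\gamma(M^T)$ at a time and iterates, whereas you package all the redundant generators into a single block-unipotent matrix $U$. Your added care about homogeneity of the coefficients (so that $U$ is a degree-zero graded isomorphism) is the only point the paper leaves implicit, and you handle it correctly.
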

\begin{proof}
Let $(h_1,\ldots,h_n)=\gamma(M^T).$ 
Let us suppose that $h_n=\sum_{i=1}^{n-1}a_ih_i.$ Let $\varphi:R^n\to R^n$ be the map associated to the matrix $M.$ We change the basis in the domain of $\varphi$ from $(e_1,\ldots,e_n)$ to $(v_1,\ldots,v_n),$ where $v_i=e_i+a_ie_n,$ for $1\le i\le n-1$ and $v_n=e_n.$ Then $\rho(1)=\sum_{i=1}^{n-1}h_iv_i.$
By iterating this procedure we get the stated result.
\end{proof} 

According to Lemma \ref{zeri} we will use the following notation. 
Let $I$ be a generalized Gorenstein ideal $I$ of homological dimension $3$ and let
 $$0\to R\xr{\rho}R^n\to R^n\to R\to R/I\to 0$$
be a minimal free resolution.
We define $\zeta(I)=\nu(I)-\nu(I(\rho)).$ 

Note that $0\le\zeta(I)\le\nu(I)-3.$
\par

Now we would like to study minimal free resolutions for generalized Gorenstein ideals $I$ of homological dimension $3$ with maximal $\zeta(I).$
Observe that in this case, using the same notation as before, $\rho(1)=(h_1,h_2,h_3,0,\ldots,0),$ 
where $(h_1,h_2,h_3)$ is a regular sequence.

\par
Using Proposition \ref{dual-2}, $R/I$ has a minimal free resolution of the type
 \begin{equation}\label{risol}0\to R\xr{(\tau,0)}R^{3}\oplus R^{n-3}\xr{(\alpha\kappa)\oplus\delta}R^n\to R,  \end{equation}
where $\tau(1)=(h_1,h_2,h_3),$ $\kappa:R^3\to R^3$ is the Koszul map on $h_1,h_2,h_3$ and $\alpha:R^3\to R^n,$ 
$\delta:R^{n-3}\to R^n$ are suitable maps.
Consequently $I=I_M,$ where $M$ is a minimal presentation matrix having the structure $M=(AK|C),$ with 
  $$K=\begin{pmatrix}0 & h_3 & -h_2\\ -h_3 &0 & h_1\\ h_2 & -h_1 & 0\end{pmatrix},$$
for some $A\in R^{n,3}$ and $C\in R^{n,n-3}.$
In the next result we will give the structure of the generators of such ideals.

\begin{prp}\label{cn}
Let $I=I_M$ be a generalized Gorenstein ideal of homological dimension $3$ with maximal $\zeta(I),$ where $M=(AK|C).$
Then $I$ is generated by the maximal minors obtained by deleting one by one the first $n$ rows of the $(n+1)\times n$-matrix
  $$B=\begin{pmatrix} \\ & A & & & C &\\ \\ h_1 &h_2 & h_3 & 0 &\ldots & 0 \end{pmatrix}.$$
\end{prp}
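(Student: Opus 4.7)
The plan is to match, for each $i\in\{1,\dots,n\}$, the minor $b_i$ of $B$ (obtained by deleting row $i$) with the generator $g_i$ of $I_M=(g_1,\dots,g_n)$, up to a sign and a common unit. The key tool is the cofactor identity provided by Lemma~\ref{lcof} and Theorem~\ref{pres}: since $M$ is a minimal presentation matrix, the $(i,1)$-entry of the cofactor matrix $M^C$ equals $u\,g_i\,h_1$ for some unit $u\in R$.

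Let $\Delta_{i,j}$ denote the $(n-1)\times(n-1)$ minor of $(A\mid C)$ obtained by removing row $i$ and column $j$, and let $M'_{i,1}$ be the $(n-1)\times(n-1)$ submatrix of $M$ obtained by deleting row $i$ and column $1$. Expanding $b_i$ along the last row $(h_1,h_2,h_3,0,\dots,0)$ of $B$ yields immediately
\[
b_i \;=\; (-1)^{n+1}\bigl(h_1\Delta_{i,1}-h_2\Delta_{i,2}+h_3\Delta_{i,3}\bigr).
\]
To compute $\det M'_{i,1}$, I would use that if $a_1,a_2,a_3$ are the columns of $A$, then the three columns of $AK$ are the Koszul combinations $-h_3a_2+h_2a_3$, $h_3a_1-h_1a_3$, $-h_2a_1+h_1a_2$. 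After deleting the first of them and restricting to rows $\neq i$, a short multilinear expansion in the two remaining columns of $AK$ — in which the repeated-$a_{i,1}$ term vanishes and the surviving bilinear terms regroup into the $\Delta_{i,j}$'s — produces the clean factorization
\[
\det M'_{i,1} \;=\; h_1\bigl(h_1\Delta_{i,1}-h_2\Delta_{i,2}+h_3\Delta_{i,3}\bigr) \;=\; (-1)^{n+1}\,h_1\,b_i.
\]

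Combining via $(M^C)_{i,1}=(-1)^{i+1}\det M'_{i,1}$ and $(M^C)_{i,1}=u\,g_i\,h_1$ gives $u\,g_i\,h_1=(-1)^{i+n}\,h_1\,b_i$. Since $h_1$ is part of the regular sequence $(h_1,h_2,h_3)$, it is a nonzerodivisor in $R$, so cancelling $h_1$ yields $g_i=(-1)^{i+n}u^{-1}b_i$, whence $I_M=(b_1,\dots,b_n)$, as required. The only technical hurdle is the sign bookkeeping that delivers the crucial factorization $\det M'_{i,1}=h_1\cdot(\cdots)$ in which the parenthesized expression matches, up to a global sign, the row expansion of $b_i$; the subsequent cancellation is immediate.
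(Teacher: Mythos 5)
Your proof is correct and rests on the same key computation as the paper's: the multilinear expansion of the minor of $M=(AK\,|\,C)$ obtained by deleting row $i$ and one of the first three columns, which yields $\det M_{(i;1)}=\pm\,h_1 B_i$. The only difference is the wrap-up — the paper converts this into a statement about generators via Lemma \ref{pol} (the maximal minors of a full-rank $n\times(n-1)$ submatrix give $\gamma(M)$ up to a common factor), whereas you route through the cofactor identity $M^C=u(g_ih_j)$ of Lemma \ref{lcof} and Theorem \ref{pres} and cancel the nonzerodivisor $h_1$; both are valid, and your version has the minor advantage of pinning down $g_i=\pm u^{-1}B_i$ exactly, with no residual $\GCD$ normalization to check.
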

\begin{proof}
To compute a minimal set of generators for $I,$ for instance $\gamma(M),$ it is enough to compute the maximal minors of a submatrix of $M$ obtained by choosing a submatrix of $M$ of size $n\times (n-1)$ of rank $n-1$ (see Lemma \ref{pol}). Note that, since $\rank(AK)=2,$ to obtain such a submatrix, we are forced to remove one of the first three columns.
\par
Let $M_{(i;j)}$ be the minor of $M$ obtained by deleting the row $i$ and the column $j.$ The following computation will show that, for some $s,$
 $$M_{(i;j)}=(-1)^sh_j B_i,$$
where $B_i$ is the minor of $B$ obtained by deleting the row $i.$ Hence $I=(B_1,\ldots,B_n).$ 
\par
In fact we write the matrix $A$ by columns $A=(A_1 A_2 A_3)$ and $M$ in this way
 $$M=(-h_3A_2+h_2A_3\,|\,h_3A_1-h_1A_3\,|\,-h_2A_1+h_1A_2\,|\,C).$$
Moreover we will write $A_j^{(i)}$ the submatrix obtained by $A_j$ by removing the $i$-th row.
So 
$$M_{i,1}=|h_3A_1^{(i)}\,\,-h_2A_1^{(i)}\,\, C|+|h_3A_1^{(i)}\,\,h_1A_2^{(i)}\,\, C|+|-h_1A_3^{(i)}\,\,-h_2A_1^{(i)}\,\, C|+$$
$$+|-h_1A_3^{(i)}\,\,h_1A_2^{(i)}\,\, C|=h_1(h_1|A_2^{(i)}\,\,A_3^{(i)}\,\, C|-h_2|A_1^{(i)}\,\,A_3^{(i)}\,\, C|+
h_3|A_1^{(i)}\,\,A_2^{(i)}\,\, C|)$$ $$=(-1)^sh_1B_i.$$
similarly we get $M_{i,2}$ and $M_{i,3}.$
\end{proof}

In order to reverse Proposition \ref{cn}, we need to fix some notation. Let $B\in R^{n+1,n}$ be a minimal Hilbert-Burch matrix, such that a row, say the last row, is $H=(h_1,h_2,h_3,0,\ldots,0).$ Then $B$ has the following shape
 \begin{equation}\label{matriceB}
        B=\begin{pmatrix} \\ & A & & & C &\\ \\ h_1 &h_2 & h_3 & 0 &\ldots & 0 \end{pmatrix}. 
 \end{equation}
Moreover we write $B_i$ for the minor obtained from $B$ by removing the $i$-th row, multiplied by $(-1)^i.$ 

\begin{prp}\label{cninv}
With the above notation
let $B\in R^{n+1,n}$ be a minimal Hilbert-Burch matrix, such that the last row is $H=(h_1,h_2,h_3,0,\ldots,0)$ with $(h_1,h_2,h_3)$ a regular sequence.
Let $I$ be the ideal generated by $B_1,\ldots,B_n.$
\par
Then $I$ is a generalized Gorenstein ideal of homological dimension $3$ with maximal $\zeta(I).$

\end{prp}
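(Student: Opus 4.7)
Let $K$ denote the $3\times 3$ Koszul matrix on $(h_1,h_2,h_3)$ and set $M := (AK\mid C)$, exactly the shape of the minimal presentation matrix produced in Proposition \ref{cn}. The plan is to show that $M$ is a minimal presentation matrix with $I_M = I$: by Theorem \ref{ris} this delivers the minimal resolution
$$0\to R\to R^n\xr{M}R^n\to R\to R/I\to 0,$$
exhibiting $R/I$ as a generalized Gorenstein algebra of homological dimension $3$. Using $K(h_1,h_2,h_3)^T=0$, the vector $(h_1,h_2,h_3,0,\ldots,0)^T$ lies in $\Ker M$, and its coordinates have $\GCD$ equal to $1$ because $(h_1,h_2,h_3)$ is regular; so this vector is primitive and must coincide, up to a unit, with the generator $\rho(1)$ of $\Ker M$ supplied by Theorem \ref{ris}. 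This forces $I(\rho)=(h_1,h_2,h_3)$, hence $\nu(I(\rho))=3$ and $\zeta(I)=n-3$ is maximal.

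The decisive step is to apply Proposition \ref{carm}: show that every syzygy on $(B_1,\ldots,B_n)$ lies in $C(M)$. Given $(s_1,\ldots,s_n)$ with $\sum s_i B_i=0$, appending a zero produces a syzygy on the full Hilbert-Burch ideal $(B_1,\ldots,B_{n+1})$, which by the Hilbert-Burch theorem equals $B\beta$ for some $\beta\in R^n$. Reading off the last coordinate gives $\beta_1 h_1+\beta_2 h_2+\beta_3 h_3=0$; since $(h_1,h_2,h_3)$ is a regular sequence, its syzygy module is generated by the columns of $K$, so $(\beta_1,\beta_2,\beta_3)^T=K\gamma$ for some $\gamma\in R^3$. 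The first $n$ coordinates of $B\beta$ then read
$$(s_1,\ldots,s_n)^T = A(\beta_1,\beta_2,\beta_3)^T + C(\beta_4,\ldots,\beta_n)^T = AK\gamma + C(\beta_4,\ldots,\beta_n)^T,$$
visibly an element of $C(M)$. Proposition \ref{carm} then certifies $M$ as a presentation matrix.

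To conclude $I_M=I$, expand $(B_1,\ldots,B_n)\cdot M$ using the Hilbert-Burch syzygies $\sum_i B_i a_{ij} = -B_{n+1}h_j$ for $j=1,2,3$ and $\sum_i B_i c_{ij}=0$ for the columns of $C$: a short calculation with $K$ gives $(B_1,\ldots,B_n)M=0$, so Corollary \ref{sist} yields $(B_1,\ldots,B_n)=\lambda\,\gamma(M)$ for some $\lambda\in R$, with $\lambda$ equal to $\GCD(B_1,\ldots,B_n)$ up to units. A standard UFD argument using $\GCD(h_1,h_2)=1$ (valid since $(h_1,h_2)$ is a regular subsequence) shows that any non-unit common divisor $d$ of $B_1,\ldots,B_n$ also divides $B_{n+1}$; combined with coprimality of the full Hilbert-Burch minors, this forces $d$ to be a unit. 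Hence $\lambda$ is a unit and $I_M=I$.

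Finally, minimality follows because $B$ is a minimal Hilbert-Burch matrix: the entries of $C$ have positive degree, those of $AK$ are bilinear in the $a_{ij}$'s and $h_k$'s (all of positive degree), and the coordinates of $\rho(1)$ lie in $\{h_1,h_2,h_3,0\}$. All relevant maps therefore have entries of positive degree, the resolution is minimal, and $R/I$ has homological dimension exactly $3$, with $\zeta(I)=n-3$ attaining its maximum. I expect the main obstacle to be the syzygy-lifting step, in which the Hilbert-Burch and Koszul structures must be aligned; the $\GCD$ verification is a secondary technical point that genuinely relies on the regular-sequence hypothesis.
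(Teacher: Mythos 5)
Your proof is correct, and its decisive computation --- appending a zero to a syzygy on $(B_1,\ldots,B_n)$, lifting it through the Hilbert--Burch matrix $B$, and factoring the resulting relation $\beta_1h_1+\beta_2h_2+\beta_3h_3=0$ through the Koszul matrix $K$ --- is exactly the argument the paper uses to prove $\Ker\gamma\subseteq\Imm\varphi$. The surrounding scaffolding differs. The paper works with the explicit four-term complex and verifies its exactness directly; in particular it checks $\Ker\varphi\subseteq\Imm\rho$ by hand, using that minimality of $B$ forces $\det(A\mid C)\ne0$, hence $\alpha$ and $\delta$ are injective, so a kernel element has first block in $\Ker\kappa=\Imm\tau$ and second block zero. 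You instead certify $M$ as a presentation matrix via Proposition \ref{carm} and then let Theorem \ref{ris} supply the rest of the resolution, recovering $\Imm\rho=\Ker\varphi$ from the primitivity of $(h_1,h_2,h_3,0,\ldots,0)$ inside the free rank-one kernel. This shortens the exactness check but obliges you to carry out two verifications the paper's route avoids: first, that $\rank M=n-1$, which is needed before $\gamma(M)$, Corollary \ref{sist} and Proposition \ref{carm} may be invoked (it follows since $\det(A\mid C)=\pm B_{n+1}\ne0$ gives $\rank(AK)=2$ and $\rank C=n-3$, with independent column spans --- you should say this explicitly); second, that $\GCD(B_1,\ldots,B_n)$ is a unit so that $I_M=(B_1,\ldots,B_n)$, and your argument for this, via $B_{n+1}h_1,B_{n+1}h_2\in(B_1,\ldots,B_n)$ and $\GCD(h_1,h_2)=1$, is correct and in fact makes explicit a point the paper leaves tacit. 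One ordering caveat: Proposition \ref{carm} concerns syzygies on $\gamma(M)$, so the identity $(B_1,\ldots,B_n)=\lambda\gamma(M)$ with $\lambda\ne0$ (hence equality of the two syzygy modules over the domain $R$) must be in place before you invoke it, not after --- a rearrangement, not a gap.
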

\begin{proof}
Let us consider the complex
 $$0 \to R \stackrel{\rho}{\rightarrow} R^n\stackrel{\varphi}{\rightarrow}R^n\stackrel{\gamma}{\rightarrow}R$$
where $\rho(1)=(h_1,h_2,h_3,0,\ldots,0),$  $\varphi$ is represented by the matrix $M=(AK|C)$, where $K$ is the matrix of the central map of the Koszul complex on $(h_1,h_2,h_3)$ and $\gamma$ is the map defined by the row $(B_1,\ldots,B_n).$
We have to check that this complex is exact. According to our hypotheses it is useful to rewrite it as follows.
 \begin{equation}\label{mfr0}
   0 \to R \xr{\rho=(\tau,0)} R^3\oplus R^{n-3}\xr{\varphi=\alpha\kappa\oplus\delta}R^n\xr{\gamma}R,
 \end{equation}
where $\tau(1)=(h_1,h_2,h_3)$ and $\alpha,$ $\kappa$ and $\delta$ are the maps represented respectively by $A,$ $K$ and $C.$
\par
Of course $\rho$ is injective and $\Imm\rho\subseteq\Ker\varphi.$
\par
Now we show that $\Ker\varphi\subseteq\Imm\rho.$ Let $u=(u_1,u_2)\in\Ker\varphi,$ $u_1\in R^3,$ $u_2\in R^{n-3}$ i.e. 
$\alpha\kappa(u_1)=0$ and $\delta(u_2)=0.$ Since $B$ is minimal, $\det (A|C)\ne 0,$ so $\alpha$ is injective, therefore 
$\kappa(u_1)=0.$ Consequently 
$u_1\in\Ker\kappa=\Imm\tau.$ Since $B$ is an Hilbert-Burch matrix, $C$ has maximal rank, hence $\delta$ is injective i.e. $u_2=0,$ therefore $u=(u_1,u_2)\in\Imm\rho.$
\par
By Proposition \ref{cn} we have that $\Imm\varphi\subseteq\Ker\gamma,$ so we need to
show that $\Ker\gamma\subseteq\Imm\varphi.$ Let $(v_1,\ldots,v_n)\in\Ker\gamma.$ Then $(v_1,\ldots,v_n,0)$ is a syzygy on 
$(B_1,\ldots,B_n,\det(A|C)).$ So $(v_1,\ldots,v_n,0)$ belongs to the module generated by the columns of $B$ i.e.
 $$\begin{pmatrix}v_1 \\ \ldots \\ v_n \\ 0\end{pmatrix}=\sum_{i=1}^3\lambda_i\begin{pmatrix}b_{1i} \\ \ldots \\ b_{ni} \\ h_i\end{pmatrix}+\sum_{i=4}^n\lambda_i\begin{pmatrix}b_{1i} \\ \ldots \\ b_{ni} \\ 0\end{pmatrix}
\rw$$ $(v_1,\ldots,v_n)=\alpha(\lambda_{1},\lambda_{2},\lambda_{3})+\delta(\lambda),$ where 
$\lambda=(\lambda_4,\ldots,\lambda_n).$
Moreover, since $\lambda_1h_1+\lambda_2h_2+\lambda_3h_3=0,$ we deduce that $(\lambda_1,\lambda_2,\lambda_3)=\kappa(z),$ for some 
$z\in R^3.$ Therefore
 $(v_1,\ldots,v_n)=\alpha\kappa(z)+\delta(\lambda),$ consequently $\varphi(z,\lambda)=(v_1,\ldots,v_n).$
 \end{proof}

By Proposition \ref{cninv} $I_M$ is generated by $n$ among $n+1$ maximal minors of the matrix $B$ in (\ref{matriceB}). 
The next result will allow us to give a structure for $I_M$ in terms of intersection of two simpler ideals.

\begin{lem}\label{reg}
Let $B\in R^{n+1,n},$ with $\rank B=n.$ Let $B_i,$ $1\le i \le n+1$ be the maximal minors of $B.$ Let $I(B)=(B_1,\ldots,B_{n+1}),$ such that $\hgt I(B)=2.$ 
Let $(h_1,\ldots,h_n)$ be the last row in $B.$ Then
 $$I(B)\cap(h_1,\ldots,h_n)=(B_1,\ldots,B_n) \iff B_{n+1} \text{ is regular in }R/(h_1,\ldots,h_n).$$
\end{lem}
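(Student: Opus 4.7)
The plan is to build the proof on two structural observations about $B$ and then invoke Hilbert-Burch, since the hypothesis $\hgt I(B)=2$ is exactly what is needed to promote the column-syzygy relations on the maximal minors into a full resolution.

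First I would record two elementary facts. (a) For each $i\le n$, the minor $B_i$ is the determinant (up to sign) of an $n\times n$ submatrix of $B$ whose bottom row is still $(h_1,\ldots,h_n)$; expanding along that row gives $B_i\in(h_1,\ldots,h_n)$, so the inclusion $(B_1,\ldots,B_n)\subseteq I(B)\cap(h_1,\ldots,h_n)$ is automatic. (b) Appending the $j$-th column of $B$ to $B$ as an extra column produces a singular $(n+1)\times(n+1)$ matrix; Laplace expansion along the new column, together with the sign convention $B_i=(-1)^iM_i$, yields $(B_1,\ldots,B_{n+1})\cdot B=0$. Reading off the $j$-th relation gives $h_jB_{n+1}=-\sum_{i=1}^{n}b_{ij}B_i\in (B_1,\ldots,B_n)$, so $(h_1,\ldots,h_n)B_{n+1}\subseteq (B_1,\ldots,B_n)$.

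Next I would bring in Hilbert-Burch: because $\hgt I(B)=2$ and $B$ has rank $n$, the Hilbert-Burch theorem says that the columns of $B$ generate the entire first syzygy module of $(B_1,\ldots,B_{n+1})$, i.e.
$$0\to R^n\xrightarrow{B}R^{n+1}\xrightarrow{(B_1\ \ldots\ B_{n+1})}R\to R/I(B)\to 0$$
is exact. This is the only non-elementary input.

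For $(\Leftarrow)$, let $f\in I(B)\cap(h_1,\ldots,h_n)$ and write $f=\sum_{i=1}^{n+1}\alpha_iB_i$. By (a), $\sum_{i\le n}\alpha_iB_i\in(h_1,\ldots,h_n)$, hence $\alpha_{n+1}B_{n+1}\in(h_1,\ldots,h_n)$; the regularity hypothesis forces $\alpha_{n+1}\in(h_1,\ldots,h_n)$, and then (b) gives $\alpha_{n+1}B_{n+1}\in(B_1,\ldots,B_n)$, so $f\in(B_1,\ldots,B_n)$. For $(\Rightarrow)$, suppose $\alpha B_{n+1}\in(h_1,\ldots,h_n)$. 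Then $\alpha B_{n+1}\in I(B)\cap(h_1,\ldots,h_n)=(B_1,\ldots,B_n)$ by hypothesis, so $\alpha B_{n+1}=\sum_{i=1}^{n}\mu_iB_i$ for some $\mu_i\in R$. The vector $(-\mu_1,\ldots,-\mu_n,\alpha)^T$ is thus a syzygy on $(B_1,\ldots,B_{n+1})$, so by Hilbert-Burch it is an $R$-linear combination of the columns of $B$; inspecting the last coordinate exhibits $\alpha$ as an $R$-linear combination of $h_1,\ldots,h_n$, as required.

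I do not foresee a serious obstacle: the main work has already been done by Hilbert-Burch. The only delicate point is bookkeeping the sign convention $B_i=(-1)^i M_i$ so that the identity $(B_1,\ldots,B_{n+1})\cdot B=0$ comes out correctly; once that is pinned down, both implications fall out of the same two observations.
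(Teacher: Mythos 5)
Your proof is correct and follows essentially the same route as the paper's: both directions rest on the Hilbert--Burch exactness of $0\to R^n\xrightarrow{B}R^{n+1}\to R$ together with the Laplace relation $(B_1,\ldots,B_{n+1})\cdot B=0$, exactly as in the paper. The only difference is presentational — you isolate the two auxiliary facts up front — so there is nothing to add.
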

\begin{proof}
Take $a_{n+1}\in R,$ such that $a_{n+1}B_{n+1}\in (h_1,\ldots,h_n).$ By assumption there exist $a_1,\ldots,a_n\in R$ such that
$\sum_{i=1}^{n}a_iB_i=a_{n+1}B_{n+1}.$ Since $B$ is an Hilbert-Burch matrix, $(a_1,\ldots,a_{n+1})$ belongs to the $R$-module generated by the columns of $B.$ In particular $a_{n+1}\in (h_1,\ldots,h_n).$
\par
Conversely we have only to prove that $I(B)\cap(h_1,\ldots,h_n)\subseteq(B_1,\ldots,B_n).$ Let $f\in I(B)\cap(h_1,\ldots,h_n),$ so
$f=\sum_{i=1}^{n+1}a_iB_i.$ Since $B_i\in(h_1,\ldots,h_n) $ for $1\le i\le n,$ we get $a_{n+1}B_{n+1}\in(h_1,\ldots,h_n).$ Then, by the assumption, $a_{n+1}\in (h_1,\ldots,h_n),$ i.e. $a_{n+1}=\sum_{i=1}^nu_ih_i.$ For $1\le j\le n,$ $\sum_{i=1}^n b_{ij}B_i=-h_jB_{n+1},$ hence 
$\sum_{i=1}^n u_jb_{ij}B_i=-u_jh_jB_{n+1}.$ Summing up we get $$\sum_{j=1}^n\sum_{i=1}^n u_jb_{ij}B_i=-a_{n+1}B_{n+1},$$ so 
$a_{n+1}B_{n+1}\in (B_1,\ldots,B_n),$ i.e. $f\in (B_1,\ldots,B_n).$
\end{proof}

Using Lemma \ref{reg}, we can give a geometric description of projective schemes having a 
minimal free resolution of type (\ref{mfr0}).

\begin{thm}\label{sch}
Let $X\subset\pp^r,$ $r\ge 3$ be a closed projective scheme, whose defining ideal $I_X$ has a graded minimal free resolution of type (\ref{mfr0}). Let $Z$ be the complete intersection defined by $I(\rho)=I(\tau),$ let $S=V(\det(\alpha\oplus\delta))$ and let $Y$
be the scheme defined by $I(\alpha\oplus\delta,\rho^*).$ If $\codim(S\cap Z)=4$ then $X=Y\cup Z.$
\end{thm}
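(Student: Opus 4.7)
The plan is to express $I_Y$ and $I_Z$ in terms of the Hilbert-Burch matrix $B$ of (\ref{matriceB}) and then reduce the theorem to a single application of Lemma \ref{reg}. First I would unpack the three ideals. Since $\rho(1)=(h_1,h_2,h_3,0,\ldots,0)$, one has $I_Z=I(\rho)=(h_1,h_2,h_3)$, so $Z$ is a codimension-$3$ complete intersection and $R/I_Z$ is Cohen-Macaulay. I read $I(\alpha\oplus\delta,\rho^*)$ as the ideal of maximal minors of the $(n+1)\times n$ matrix obtained by stacking the matrix $(A|C)$ of $\alpha\oplus\delta$ on top of the row $\rho^*=(h_1,h_2,h_3,0,\ldots,0)$; that matrix is precisely the $B$ of (\ref{matriceB}). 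By Proposition \ref{cn} we already have $I_X=(B_1,\ldots,B_n)$, and since $B_{n+1}=\det(\alpha\oplus\delta)$, it follows that
\[
I_Y \;=\; (B_1,\ldots,B_n,B_{n+1}) \;=\; I_X+(\det(\alpha\oplus\delta)).
\]

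The second step reduces the theorem to a single regularity condition. Proving $X=Y\cup Z$ amounts to showing $I_X=I_Y\cap I_Z$; the inclusion $I_X\subseteq I_Y\cap I_Z$ is automatic, since each $B_i$ with $i\le n$ is computed from a matrix still containing the last row of $B$, hence by Laplace expansion belongs to $I_Z$. The opposite inclusion is exactly the content of Lemma \ref{reg} applied to $B$: it says that $I_Y\cap I_Z=I_X$ if and only if $B_{n+1}=\det(\alpha\oplus\delta)$ is a non-zero-divisor in $R/I_Z$. So the whole theorem reduces to verifying this regularity from the codimension assumption.

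Finally, I would derive the regularity from the codimension hypothesis. The ideal cutting out $S\cap Z$ is $(\det(\alpha\oplus\delta),h_1,h_2,h_3)$, so $\codim(S\cap Z)=4$ means $\det(\alpha\oplus\delta)$ has codimension $1$ in the Cohen-Macaulay ring $R/I_Z$; in a Cohen-Macaulay ring all associated primes are minimal, so an element of positive codimension is a non-zero-divisor, and the regularity follows. The main obstacle, if any, is the notational identification: one must recognize that the matrix encoding $I(\alpha\oplus\delta,\rho^*)$ is exactly the Hilbert-Burch matrix $B$ of (\ref{matriceB}), and check that the hypotheses of Lemma \ref{reg} ($\rank B=n$ and $\hgt I(B)=2$) follow from the minimality of the resolution (\ref{mfr0}) and from $\hgt I_X=2\le\hgt I(B)$. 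Once this identification is made, Lemma \ref{reg} plus the one-line Cohen-Macaulay argument do the rest.
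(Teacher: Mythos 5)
Your proposal is correct and follows exactly the route the paper takes: the paper's own proof is just the two-line observation that $\codim(S\cap Z)=4$ forces $\det(\alpha\oplus\delta)$ to be regular in $R/I_Z$ (since $R/I_Z$ is Cohen--Macaulay), after which Lemma \ref{reg} gives $I_X=I_Y\cap I_Z$. Your write-up simply fills in the identifications ($I_Y=I(B)$, $B_{n+1}=\det(\alpha\oplus\delta)$, the hypotheses of Lemma \ref{reg}) that the paper leaves implicit.
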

\begin{proof}
It is enough to observe that since $\codim(S\cap Z)= 4,$ $\det(\alpha\oplus\delta)$ is regular in $R/I_Z.$ So we can apply Theorem \ref{reg} to have our assertion. 
\end{proof}

\begin{rem}
Note that when $\det(\alpha\oplus\delta)$ is a unit, $Y=\emptyset$ and $X=Z.$ When $\det(\alpha\oplus\delta)$ is not a unit, then $X$ is a union of an aCM scheme of codimension $2$ and a complete intersection scheme of codimension $3.$
\end{rem}

\section{The case $n=3$}
\markboth{\it The case $n=3$}{\it The case $n=3$}

Now we will apply the results of previous sections and we will provide an explicit characterization of the graded Betti numbers for generalized Gorenstein ideals having a graded minimal free resolution of the type

 \begin{equation}\label{mfr3} 0 \to F_3\xr{\rho} F_2\xr{\varphi}F_1\xr{\psi}R\to R/I\to 0\end{equation}
with $\rank F_1= \rank F_2 =3$ (and consequently $\rank F_3=1$). 
\par
We start by observing that, by the exactness criterion, $\imm \rho$ is generated by a regular sequence $(h_1,h_2,h_3).$ 
\par
Let $M$ be the matrix associated to $\varphi$ with respect suitable bases.
Since $(h_1,h_2,h_3)$ is a regular sequence, its first syzygy module is generated by the rows of the following matrix
  $$K=\begin{pmatrix}0 & h_3 & -h_2\\ -h_3 &0 & h_1\\ h_2 & -h_1 & 0\end{pmatrix}.$$
As $\varphi\rho=0,$ we get $M=AK,$ where $A\in R^{3,3}.$
Consequently the resolution (\ref{mfr3}) can be written in the following way
 \begin{equation}\label{mfr4} 0 \to F_3\xr{\rho} F_2\xr{\varphi=\alpha\kappa}F_1\xr{\psi}R\to R/I\to 0\end{equation}
where $\alpha$ and $\kappa$ are the maps associated to the mentioned matrices $A$ and $K.$
\begin{prp}\label{ctre}
If
 $$0 \to F_3 \stackrel{\rho}{\rightarrow} F_2\stackrel{\varphi}{\rightarrow}F_1\stackrel{\psi}{\rightarrow}R$$
is a graded minimal free resolution and $M$ is the matrix associated to $\varphi$ with respect suitable bases, then $I_M$ is generated by the maximal minors obtained by deleting one by one the first $3$ rows of the $4\times 3$-matrix
  $$B=\begin{pmatrix} \\ & A & \\ \\ h_1 &h_2 & h_3 \end{pmatrix},$$
where $A$ is the matrix defined above and $(h_1,h_2,h_3)$ generates $\Imm\rho.$
\end{prp}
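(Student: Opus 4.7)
The plan is to recognise Proposition~\ref{ctre} as the special case $n=3$ of Proposition~\ref{cn}, in which the block $C$ has zero columns. All the ingredients needed to apply Proposition~\ref{cn} have already been assembled in the paragraphs immediately preceding the statement: by the Buchsbaum--Eisenbud exactness criterion applied to the minimal free resolution (\ref{mfr3}), the entries $h_1,h_2,h_3$ of the map $\rho$ form a regular sequence in $R$; and from $\varphi\rho=0$ together with the fact that the rows of the Koszul matrix $K$ span the full syzygy module on a length-$3$ regular sequence one deduces $M=AK$ for a suitable $A\in R^{3,3}$.

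For $n=3$ one has $\zeta(I)\in\{0,\ldots,n-3\}=\{0\}$, so $\zeta(I)=0$ is automatically the maximal value; moreover it is attained, since $\nu(I)=\nu(I(\rho))=3$. Hence we are exactly in the situation of Proposition~\ref{cn} with $n=3$ and the $C$-block vacuous. Invoking that proposition immediately yields that $I_M$ is generated by the three minors $B_1,B_2,B_3$ obtained by deleting, one at a time, the first three rows of
$$B=\begin{pmatrix} \\ & A & \\ \\ h_1 & h_2 & h_3\end{pmatrix},$$
which is the claim.

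The only substantive ingredient that would have to be reproduced if one preferred a self-contained proof is the minor identity $M_{(i,j)}=(-1)^{s}h_j B_i$ for $1\le j\le 3$, where $M_{(i,j)}$ is the $3\times 3$ minor of $M=AK$ obtained by removing the $i$-th row and the $j$-th column and $B_i$ is the $3\times 3$ minor of $B$ obtained by removing the $i$-th row. This is proved by a direct three-term column expansion: write $A=(A_1\,A_2\,A_3)$, so that the columns of $AK$ become $-h_3A_2+h_2A_3$, $h_3A_1-h_1A_3$, $-h_2A_1+h_1A_2$, and each $M_{(i,j)}$ splits (by multilinearity of the determinant) as a sum of four $3\times 3$ determinants, two of which vanish because they contain a repeated column and the remaining two factor an $h_j$ out. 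This is exactly the computation carried out inside the proof of Proposition~\ref{cn} and requires no new idea. Consequently the only potential obstacle, namely identifying the present situation as an instance of Proposition~\ref{cn}, is of a purely organisational nature.
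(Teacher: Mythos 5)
Your proposal is correct and coincides with the paper's own proof, which simply observes that Proposition~\ref{ctre} is the particular case of Proposition~\ref{cn} with $n=3$, where $\zeta(I)=0$ is forced (and hence maximal) and the block $C$ is vacuous. Your additional remarks on the minor identity merely unwind the computation already contained in the proof of Proposition~\ref{cn} and add nothing that needs checking.
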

\begin{proof}
This is a particular case of Proposition \ref{cn}, when $\zeta(I)=0.$
\end{proof}

In order to reverse Proposition \ref{ctre}, we need to fix some notation. Let $B\in R^{4,3}$ be a Hilbert-Burch matrix. Let us consider a row of $B,$ say $H=(h_1,h_2,h_3)$ and let $\widehat{B}$ be the matrix  obtained from $B$ by deleting the row 
$H.$ Let $B_1,B_2,B_3$ be the maximal minors of $B$ including the row $H.$ 

\begin{prp}\label{ctreinv}
With the above notation
let $B\in R^{4,3}$ be a Hilbert-Burch matrix, providing a minimal set of generators, such that one of its rows $H=(h_1,h_2,h_3)$ is a regular sequence.
Let $J$ be the ideal generated by $B_1,B_2,B_3.$
Then a graded minimal free resolution of $R/J$ is
 $$0 \to F_3 \stackrel{\rho}{\rightarrow} F_2\stackrel{\varphi}{\rightarrow}F_1\stackrel{\psi}{\rightarrow}R$$
where $\Imm\rho$ is generated by $(h_1,h_2,h_3),$ $\varphi=\alpha\kappa$ where $\alpha$ is the map associated to the matrix $\widehat{B},$ $\kappa$ is the central map of the Koszul complex on $(h_1,h_2,h_3),$ $\psi$ is the map defined by the row $(B_1,B_2,B_3)$ and $F_1,$ $F_2$ are graded free modules of rank three.
\end{prp}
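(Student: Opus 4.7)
The plan is to reduce directly to Proposition \ref{cninv} applied with $n=3$ and an empty $C$-block. Under this specialisation, the matrix $B\in R^{n+1,n}=R^{4,3}$ in (\ref{matriceB}) collapses to the vertical concatenation of the $3\times 3$ block $A=\widehat{B}$ with the row $H=(h_1,h_2,h_3)$, which is exactly the situation of Proposition \ref{ctreinv}. The matrix $M=(AK\mid C)$ appearing in the proof of Proposition \ref{cninv} becomes $M=AK$, and the summand $\delta$ disappears, so the complex (\ref{mfr0}) reduces precisely to the complex asserted in Proposition \ref{ctreinv}.

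First I would verify that the hypotheses of Proposition \ref{cninv} are satisfied: $B$ is a minimal Hilbert-Burch matrix by assumption, its row $H=(h_1,h_2,h_3)$ is a regular sequence by assumption, and $B_1,B_2,B_3$ are the maximal minors obtained by deleting one of the first three rows (in the notation of Proposition \ref{cn}). Proposition \ref{cninv} then yields the exactness of
$$0\to R\xrightarrow{\rho}R^3\xrightarrow{\varphi=\alpha\kappa}R^3\xrightarrow{\psi}R,$$
so this is already a graded free resolution of $R/J$ with $F_1=F_2=R^3$ (with suitable twists) and $F_3=R$.

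It remains to check minimality, i.e.\ that the entries of the matrices of $\rho$, $\varphi$ and $\psi$ all lie in the irrelevant ideal $\mathfrak{m}=(x_1,\ldots,x_r)$. For $\rho$, the entries are $h_1,h_2,h_3$, which form a regular sequence in the standard graded ring $R$ and therefore have positive degree. For $\psi$, the entries $B_1,B_2,B_3$ are maximal minors of the \emph{minimal} Hilbert-Burch matrix $B$, so the hypothesis that $B$ provides a minimal set of generators translates into $B_i\in\mathfrak{m}$. For $\varphi$, the matrix $M=AK$ has each column given by the corresponding column of $K$ multiplied into $A$; since every column of $K$ has entries in $\{0,\pm h_j\}$, every entry of $M$ lies in $(h_1,h_2,h_3)\subseteq\mathfrak{m}$.

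The main technical point to be mindful of is this last step on $\varphi$: one must make sure that the product $AK$ cannot inadvertently contain a unit entry, but this is automatic from the observation above on the columns of $K$. No further computation is needed, since the exactness (the structural part) has already been established in Proposition \ref{cninv}; the present proposition is essentially a repackaging of that result in the degenerate case where the Hilbert-Burch matrix has size exactly $4\times 3$.
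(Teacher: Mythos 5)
Your proposal is correct and matches the paper's own proof, which simply states that Proposition \ref{ctreinv} is the particular case of Proposition \ref{cninv} with $\zeta(I)=0$ (i.e.\ $n=3$ and empty $C$-block). Your additional explicit check of minimality (entries of $\rho$, $\varphi=\alpha\kappa$, $\psi$ lying in the irrelevant ideal) is a sound, if unstated in the paper, supplement to that reduction.
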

\begin{proof}
This is a particular case of Proposition \ref{cninv}, when $\zeta(I)=0.$
 \end{proof}

The next proposition describes the graded Betti numbers for an ideal $I\subset R$ whose resolution is of type (\ref{mfr4}).

\begin{prp}\label{bet}
Let $I\subset R$ be a generalized Gorenstein ideal, $\hgt I\ge 2.$
Then there exist six integers $d_1,d_2,d_3,a_1,a_2,a_3,$ with $d_i>0$ and $a_i\ge 0,$ such that the graded minimal free resolution of $R/I$ is
 $$0\to R(-d-a)\to\bigoplus_{i=1}^3R(d_i-d-a)\to\bigoplus_{i=1}^3R(a_i-d_i-a)\to R,$$
where $d=d_1+d_2+d_3$ and $a=a_1+a_2+a_3.$
\par
Conversely if we choose six integers  $d_1,d_2,d_3,a_1,a_2,a_3,$ with $d_i>0$ and $a_i\ge 0,$ then there exists a generalized Gorenstein ideal, $\hgt I\ge 2,$ such that $R/I$ has the the following minimal graded free resolution
 $$0\to R(-d-a)\to\bigoplus_{i=1}^3R(d_i-d-a)\to\bigoplus_{i=1}^3R(a_i-d_i-a)\to R,$$
where $d=d_1+d_2+d_3$ and $a=a_1+a_2+a_3.$
\end{prp}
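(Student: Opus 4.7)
My plan is to prove the two directions using the structure theorem of Proposition~\ref{ctre}.

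For the forward direction, starting from a minimal graded free resolution of $R/I$ of the given shape, invoke Proposition~\ref{ctre} to present $I$ as $(B_1,B_2,B_3)$, the three relevant maximal minors of the $4\times 3$ matrix $B$ whose top $3\times 3$ block is a matrix $A$ and whose bottom row is a regular sequence $(h_1,h_2,h_3)$. Set $d_j:=\deg h_j>0$ (positive by minimality) and $d:=d_1+d_2+d_3$. Reading the twists through the factorization $\varphi=\alpha\kappa$ in~(\ref{mfr4}), one finds the unique integer $a=\sum_j \beta_j-2s$ such that $s=d+a$, $\beta_j=d+a-d_j$ and $\deg A_{ij}=d_j+a-\mu_i$ for every $i,j$; defining $a_i:=d_i+a-\mu_i$ then gives $\mu_i=d_i+a-a_i$, which is the stated parametrization. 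The non-negativity of each $a_i$ (and hence of $a=\sum_i a_i$) is obtained by a Hall marriage argument on the bipartite graph whose edges are the pairs $\{i,j\}$ with $A_{ij}\ne 0$: after a suitable simultaneous relabeling of rows and columns of $A$ the diagonal entries $A_{ii}$ are all non-zero, so $a_i=\deg A_{ii}\ge 0$. Hall's conditions follow from $\hgt I\ge 2$ together with $\nu(I)=3$: no row of $A$ can be identically zero (otherwise some generator $B_{i'}$ vanishes, the $3\times 3$ block defining it having a zero row), no column of $A$ can be identically zero (otherwise the expansion of each $B_i$ along the last row collapses to a single term, making every generator divisible by the same $h_j$ and forcing $\hgt I\le 1$), and no two rows of $A$ can have their non-zero entries confined to a single column (otherwise the corresponding $3\times 3$ block has rank at most~$2$ and the complementary minor vanishes).

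For the converse I construct the resolution directly via Proposition~\ref{ctreinv}: choose a generic regular sequence $(h_1,h_2,h_3)$ of degrees $(d_1,d_2,d_3)$, which exists because $r\ge 3$, and pick a sufficiently general $3\times 3$ matrix $A$ whose $(i,j)$-entry is a form of degree $d_j-d_i+a_i$ when this integer is non-negative, and is zero otherwise. The diagonal entries $A_{ii}$ have degree $a_i\ge 0$ and may be chosen non-zero, so $\det A\not\equiv 0$; thus the $4\times 3$ matrix $B$ with top block $A$ and bottom row $(h_1,h_2,h_3)$ is a Hilbert-Burch matrix satisfying the hypotheses of Proposition~\ref{ctreinv}, which immediately produces a generalized Gorenstein ideal with the prescribed minimal graded free resolution.

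The step I expect to be the main obstacle is the Hall-matching argument in the forward direction: pinning down exactly the combinatorial restrictions on the vanishing pattern of $A$ forced by $\hgt I\ge 2$ and the irredundance of the three generators, and ruling out accidental height drops caused by unforeseen cancellations among the maximal minors of $B$.
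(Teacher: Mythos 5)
Your proof is correct and follows the same skeleton as the paper's: the forward direction reads the twists off the factorization $\varphi=\alpha\kappa$ coming from Proposition~\ref{ctre}, and the converse is an application of Proposition~\ref{ctreinv}. The one point where you genuinely go beyond the paper is the non-negativity of the $a_i$: the paper's proof simply sets $a_i=e_i-e'_i$ for the given bases of the source and target of $\alpha$ and never explains why these differences are $\ge 0$, whereas you correctly identify that one must first relabel the rows and columns of $A$ so that all diagonal entries are nonzero, and your three Hall conditions (no zero row, no zero column, no two rows supported in a single column) are exactly what $\nu(I)=3$ and $\hgt I\ge 2$ supply; this fills a real gap in the published argument. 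In the converse direction the paper sidesteps all genericity considerations by taking the explicit choice $A=\mathrm{diag}(g_1,g_2,g_3)$ with $\deg g_i=a_i$ over the row $(h_1,h_2,h_3)$, i.e.\ $I=(h_1g_2g_3,\,h_2g_1g_3,\,h_3g_1g_2)$, for which the hypotheses of Proposition~\ref{ctreinv} can be checked directly; your generic matrix $A$ specializes to this one, so your argument works, but the paper's concrete witness is the cleaner way to finish (and, like yours, it quietly ignores the degenerate case where some $a_i=0$ makes a unit entry appear in $B$).
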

\begin{proof}
Since $R/I$ has a minimal free resolution of type (\ref{mfr4}), we set $d_1,d_2,d_3,$ the degrees of the complete intersection 
$I(\rho)$ and 
$\alpha:\bigoplus_{i=1}^3R(-e_i)\to \bigoplus_{j=1}^3R(-e'_j).$ We set $a_i=e_i-e'_i,$ for $1\le i\le 3.$ By Proposition \ref{ctre} we see that the degrees of the minimal generators of $I$ are $d_1+a_2+a_3,$ $d_2+a_1+a_3,$ 
$d_3+a_1+a_2$ i.e. $a+d_i-a_i$ for $1\le i\le 3.$ Furthermore since $\varphi=\alpha\kappa,$ a simple computation shows that the shifts of the second module are $e_1-e'_1+d_2+(a+d_1-a_1)=a+d_1+d_2,$ 
$e_2-e'_2+d_3+(a+d_2-a_2)=a+d_2+d_3,$ $e_3-e'_3+d_1+(a+d_3-a_3)=a+d_1+d_3,$  consequently they are $a+d-d_i,$ for $1\le i\le 3.$ Since the map $\rho$ is the map of the complete intersection of type $(d_1,d_2,d_3)$ 
the last graded Betti number is $a+d.$
\par
Conversely let $J=(h_1,h_2,h_3)$ be a complete intersection with $\deg h_i=d_i$ for $1\le i\le 3$ and we choose three forms $g_i,$ 
$\deg g_i=a_i$ for $1\le i\le 3.$  $I=(h_1g_2g_3,h_2g_1g_3,h_3g_1g_2)$ is a required ideal. Namely if we consider the matrix
 $$B=\begin{pmatrix} g_1 & 0 & 0\\ 0 & g_2 & 0\\0 & 0 & g_3 \\ h_1 &h_2 & h_3 \end{pmatrix},$$
it satisfies the hypotheses of Proposition \ref{ctreinv}.
\end{proof}

In order to avoid trivial cases in the sequel we will use the following definition.

\begin{dfn}
A Betti sequence is said to be {\em essential} if it occurs for $R/I$ where $I$ is a homogeneous ideal with $\hgt I\ge 2.$
\end{dfn}

The next theorem will characterize the Betti sequences for generalized Gorenstein ideals of homological dimension $3$ and $\hgt I\ge 2.$

\begin{thm}\label{treB}
A sequence $(a_1,a_2,a_3;b_1,b_2,b_3;s)$ with $a_1\le a_2\le a_3$ and $b_1\ge b_2\ge b_3$ is a essential Betti sequence iff
\begin{itemize}
	\item [1)] $s=\sum_{j=1}^3 b_j-\sum_{i=1}^3 a_i;$
	\item [2)] $\sum_{i=1}^3 a_i<b_2+b_3;$
	\item [3)] $a_j+b_j\le\sum_{i=1}^3 a_i,$ for $1\le j\le 3.$
\end{itemize}
\end{thm}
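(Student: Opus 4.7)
The strategy is to reduce the statement to the parametrization given in Proposition \ref{bet}. That result gives a bijection between generalized Gorenstein ideals of homological dimension $3$ with $\hgt I\ge 2$ and tuples $(d_1,d_2,d_3;a_1',a_2',a_3')$ with $d_j>0$ and $a_j'\ge 0$; the Betti numbers of the associated ideal are $\alpha_j=d_j+(a'-a_j')$, $\beta_j=d-d_j+a'$, and $s=d+a'$, where $d=\sum d_j$ and $a'=\sum a_j'$.

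For necessity, I first sum the parametrization formulas to get $\sum_i a_i=d+2a'$ and $\sum_j b_j=2d+3a'$; subtracting yields $s=d+a'=\sum b_j-\sum a_i$, which is (1). After relabeling so $d_1\le d_2\le d_3$, the $\beta$'s are decreasing, so $b_2+b_3=(d-d_2+a')+(d-d_3+a')=d+d_1+2a'=\sum a_i+d_1>\sum a_i$, which is (2). For (3), every parametrization pair satisfies $\alpha_j+\beta_j=\sum a_i-a_j'\le\sum a_i$; an elementary rearrangement argument (swapping partners in any same-order pair of indices weakly decreases the maximum pair-sum) then shows that the opposite-order sorted pairing $(a_j,b_j)$ inherits the same bound.

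For sufficiency, given a sequence satisfying (1), (2), (3), set
\[
d_j:=\sum_{k\ne j}b_k-\sum_i a_i,\qquad a_j':=\sum_i a_i-a_j-b_j,\qquad 1\le j\le 3.
\]
Condition (3) gives $a_j'\ge 0$, and condition (2) together with $b_1\ge b_2\ge b_3$ gives $0<d_1\le d_2\le d_3$. A routine substitution (using (1)) verifies that the Betti numbers produced by Proposition \ref{bet} from $(d_j;a_j')$ agree with $(a_j;b_j;s)$ index by index; Proposition \ref{bet} then furnishes the explicit ideal $I=(h_1g_2g_3,h_2g_1g_3,h_3g_1g_2)$ with $(h_1,h_2,h_3)$ a regular sequence, whose height is at least $2$, so the Betti sequence is essential.

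The main subtlety is condition (3) in the necessity direction, where the sorted pairing from the theorem need not match the parametrization pairing; this is the point that forces the rearrangement argument (or, alternatively, a small case analysis over the six permutations of $\{1,2,3\}$). Everything else is bookkeeping around the linear change of variables between the six parameters $(d_j,a_j')$ and the seven numerical invariants $(a_i,b_j,s)$.
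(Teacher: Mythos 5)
Your proof is correct, and the sufficiency half is essentially identical to the paper's: you set $d_j=s-b_j$ and $a_j'=\sum_i a_i-a_j-b_j$ (the paper calls these $c_j$ and $t_j$), check positivity from conditions 2) and 3), and feed them into Proposition \ref{bet}. The necessity half is where you genuinely diverge. The paper argues directly on the resolution: condition 2) comes from $\deg h_1=s-b_1>0$ for the regular sequence defining the last map, and condition 3) from the claim that each entry $m_{ij}$ of the presentation matrix satisfies $\deg m_{ij}\ge\deg h_k$ for the index $k\notin\{i,j\}$, because the rows of $M$ are Koszul syzygies on $(h_1,h_2,h_3)$; this is applied with the shifts already sorted, so no rearrangement is needed. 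You instead extract all three conditions purely numerically from the forward direction of Proposition \ref{bet}, and then handle the mismatch between the parametrization's pairing $(\alpha_j,\beta_j)$ and the theorem's oppositely sorted pairing via the standard fact that the opposite-order pairing minimizes the maximum pair-sum. Your route is arguably cleaner on exactly the point where the paper is terse: membership of $m_{ij}$ in $(h_k,h_l)$, $\{k,l\}=\{1,2,3\}\setminus\{j\}$, only gives $\deg m_{ij}\ge\min(\deg h_k,\deg h_l)$ at face value, so the paper's stated inequality needs an extra word, whereas your rearrangement step is airtight and makes the sorting subtlety explicit. The trade-off is that your necessity argument leans entirely on the statement of Proposition \ref{bet} (in particular on $a_i'\ge 0$ there), while the paper's re-derives the degree constraints from the matrix structure itself. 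No gaps to report.
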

\begin{proof}
Let $(a_1,a_2,a_3;b_1,b_2,b_3;s)$ be a essential Betti sequence then there is an ideal $I$ of height at least two, such that $R/I$ has the following graded minimal free resolution
 $$0\to R(-s)\to\bo_{j=1}^3 R(-b_j)\to\bo_{i=1}^3 R(-a_i)\to R.$$
Trivially $s=\sum_{j=1}^3 b_j-\sum_{i=1}^3 a_i.$ 
\par
Moreover the last map of the resolution is defined by a regular sequence $h_1,h_2,h_3$ with $\deg h_j=s-b_j,$ for $1\le j\le 3.$ Consequently $s-b_1>0,$ i.e. $\sum_{i=1}^3 a_i<b_2+b_3.$
\par
Let $M=(m_{ij})$ be a matrix associated to the central map of the above resolution, with $\deg m_{ij}=b_j-a_i.$ We have that
$\sum_{j=1}^3 m_{ij}h_j=0,$ for $1\le i\le 3.$ Consequently 
 $$(m_{i1},m_{i2},m_{i3})\in ((0,h_3,-h_2),(-h_3,0,-h_2),(h_2,-h_1,0)),\,\,1\le i\le 3.$$
Therefore $\deg m_{ij}\ge\deg h_k,$ with $i\ne j\ne k\ne i,$ i.e.
 $$b_j-a_i\ge s-b_k\rw b_i\le a_j+a_k\rw a_j+b_j\le\sum_{i=1}^3 a_i, \text{ for } 1\le j\le 3.$$
Conversely let us suppose that the sequence $(a_1,a_2,a_3;b_1,b_2,b_3;s)$ satisfies the conditions $1,2,3$ above. 
We set $c_j=s-b_j,$ for $1\le j\le 3.$ Then $c_3\ge c_2\ge c_1$ and $c_1>0$ by the assumption $2.$ Hence $c_j>0$ for 
$1\le j\le 3.$ Now we set $t_j=\sum_{i=1}^3 a_i-a_j-b_j.$ By the assumption $3,$ $t_j\ge 0,$ for $1\le j\le 3.$
Note that $\sum_{j=1}^3 t_j+\sum_{j=1}^3 c_j=s,$ $\sum_{i=1}^3 t_i+\sum_{i=1}^3 c_i-c_j=b_j$ and 
$\sum_{i=1}^3 t_i+c_j-t_j=a_j.$ Now, applying Proposition \ref{bet} to the integers $c_1,c_2,c_3,t_1,t_2,t_3$ we get that 
$(a_1,a_2,a_3;b_1,b_2,b_3;s)$ is an essential Betti sequence.
\end{proof}


\section{Graded Betti numbers for ideals $I_M$}
\markboth{\it Graded Betti numbers for ideals $I_M$}{\it Graded Betti numbers for ideals $I_M$}

In this section we study the graded Betti numbers for generalized Gorenstein ideals $I_M,$ arising from a minimal presentation matrix $M=(m_{ij}).$
A graded minimal resolution for such ideals can be written in the following way
 \begin{equation}\label{gr}0\to R(-s)\to\bo_{j=1}^nR(-b_j)\to\bo_{i=1}^nR(-a_i)\to R\to R/I_M\to 0,\end{equation}
where $a_1\le\ldots\le a_n,$ $b_1\ge\ldots\ge b_n$ and $s=\sum_{j=1}^nb_j-\sum_{i=1}^na_i.$ 
We will set also $c_j=s-b_j,$ for $1\le j\le n.$ Note that $C=(c_1\ldots c_n)^T$ is the degree vector of the leftmost map of the resolution.
It is easy to check that $a_i<b_{n+1-i}$ for $1\le i\le n,$ and $a_2<b_n,$ $a_3<b_{n-1}.$ Moreover $b_{n-2}<s\le a_1+a_2+a_3.$
Now we set $d_{ij}=\deg m_{ij}.$ Note that $d_{ij}=b_j-a_i,$ so $d_{ij}\ge d_{i+1j}$ and $d_{ij}\ge d_{ij+1}$ for $1\le i\le n-1$ and $1\le j\le n-1.$ So if $d_{hk}=0$ for some $(h,k)$ then $m_{ij}=0$ for every $i\ge h$ and $j\ge k.$
The matrix $D=(d_{ij})\in\zz^{n,n}$ is called the degree matrix of $M.$ The degree matrix of $M$ does not determine, in general, the graded Betti numbers of $I_M.$

\begin{exm}
Let us consider the ideals
 $$I=(xyz,yzt,ztu,tuv,uvx,vxy),\, J=(xyzt,yztu,ztuv,tuvx,uvxy,vxyz).$$
Their graded minimal free resolutions are
 $$0\to R(-6)\to R(-4)^6\to R(-3)^6\to R\to R/I\to 0$$
and
 $$0\to R(-6)\to R(-5)^6\to R(-4)^6\to R\to R/J\to 0.$$
\end{exm}

However if we know one of the $c_j$'s in addition to the degree matrix, then the graded Betti numbers are determined.

\begin{prp}
Let $M$ be a minimal presentation matrix and let $D=(d_{ij})$ be the degree matrix of $M.$ Let $c_r$ be the degree of the $r$-th component of $C.$ Then the graded Betti numbers of $R/I_M$ are
$s=\sum_{i=1}^n d_{ii};$ $b_j=s+d_{rj}-d_{rr}-c_r,$ for $1\le j \le n;$ $a_i=s-d_{ir}-c_r,$ for $1\le i \le n.$
\end{prp}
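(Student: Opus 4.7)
The plan is essentially bookkeeping: everything follows from the standard degree identities of a graded minimal free resolution, so the proof is pure linear algebra once the conventions are in place. First I would record the three basic identities that encode degree-preservation of the three maps in the resolution (\ref{gr}): for the central map $\varphi$, every entry $m_{ij}$ is homogeneous of degree $b_j-a_i$, hence $d_{ij}=b_j-a_i$ for all $i,j$ (the convention being that $D$ records the numerical shift even at positions where $m_{ij}=0$); for the leftmost map, the $j$-th component has degree $c_j=s-b_j$; and the total-degree balance gives $s=\sum_{j=1}^n b_j-\sum_{i=1}^n a_i$, which was already noted at the start of Section~4.

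Next I would solve for $b_j$ and $a_i$ in terms of $d_{\cdot\cdot}$, $s$, and $c_r$. From $c_r=s-b_r$ one gets $b_r=s-c_r$, and then $d_{rr}=b_r-a_r$ yields $a_r=s-c_r-d_{rr}$. Holding the column index fixed at $r$ and varying $i$, the identity $d_{ir}=b_r-a_i$ gives
\[ a_i = b_r - d_{ir} = s-c_r-d_{ir} \quad (1\le i\le n), \]
which is the claimed formula. Symmetrically, holding the row index fixed at $r$ and varying $j$, from $d_{rj}=b_j-a_r$ we get
\[ b_j = a_r + d_{rj} = s - c_r - d_{rr} + d_{rj} \quad (1\le j\le n), \]
the second claimed formula.

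Finally, for $s=\sum_{i=1}^n d_{ii}$, I would just rewrite the Euler-type identity $s=\sum_j b_j-\sum_i a_i$ as $s=\sum_{i=1}^n (b_i-a_i)=\sum_{i=1}^n d_{ii}$ using $d_{ii}=b_i-a_i$. Note that this last formula depends only on the degree matrix $D$ and not on the choice of $r$ or the value $c_r$, consistent with the fact that $s$ is intrinsic to $D$ while $b_j$ and $a_i$ require one further piece of information (either an $a_i$, a $b_j$, a $c_j$, or $s$ itself) to be pinned down. There is no real obstacle here; the only mild point is keeping track of the convention that the degree matrix is defined at every entry via $d_{ij}=b_j-a_i$, so that the argument does not break at positions where $m_{ij}$ happens to vanish.
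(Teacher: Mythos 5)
Your proof is correct and follows essentially the same route as the paper's: both arguments use only the identities $d_{ij}=b_j-a_i$, $c_r=s-b_r$, and the rank-degree balance $s=\sum_i(b_i-a_i)$ to solve for $a_i$ and $b_j$. Your bookkeeping is in fact slightly cleaner than the paper's (which writes $d_{ri}$ where the convention $d_{ij}=b_j-a_i$ forces $d_{ir}$, as in the statement), and your closing remark on which data beyond $D$ pins down the Betti numbers is a fair observation but not needed.
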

\begin{proof}
By the exactness of (\ref{gr}) $s=\sum_{i=1}^n(b_i-a_i)=\sum_{i=1}^n d_{ii}.$
\par
Furthermore $$a_i=a_i+s-b_r-(s-b_r)=s-(b_r-a_i)-c_r=s-d_{ri}-c_r.$$
 $$b_j=a_r+d_{rj}=s-d_{rr}-c_r+d_{rj}.$$
\end{proof}

\begin{prp}\label{bettiinc}
Let $$(a_1,\ldots,a_n;b_1,\ldots,b_n;s)$$ be an essential Betti sequence. Let $u_i\ge 0,$ $1\le i\le n$ be any integers. 
We set $u=\sum_{i=1}^nu_i.$
Then the sequence
 $$(a_1+u-u_1,\ldots,a_n+u-u_n;b_1+u,\ldots,b_n+u;s+u)$$
is an essential Betti sequence.
\end{prp}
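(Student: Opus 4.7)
The plan is to start with a minimal presentation matrix $M\in R^{n,n}$ realising the given essential Betti sequence and build from it a new minimal presentation matrix $M'$ whose Betti numbers are the shifted ones. For each $i$ I would pick a form $\ell_i\in R$ of degree $u_i$ sufficiently generically (concretely, $\ell_i=L_i^{u_i}$ for pairwise non-proportional linear forms $L_i$ chosen so that no $L_i$ divides any of $g_1(M),\ldots,g_n(M)$; when $u_i=0$ put $\ell_i=1$). Writing $L=\prod_k\ell_k$, let $D$ be the diagonal matrix with entries $\ell_1,\ldots,\ell_n$ and set $M'=DM$. Then the entries $m'_{ij}=\ell_i m_{ij}$ have degree $u_i+(b_j-a_i)=(b_j+u)-(a_i+u-u_i)$, exactly matching the shifts predicted for the new resolution.

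Next I would identify $\gamma(M')$ and $\gamma((M')^T)$. Injectivity of $D$ gives $\Ker M'=\Ker M$, so $\gamma((M')^T)=\gamma(M^T)=(h_1,\ldots,h_n)$ up to a scalar unit, and in particular the ideal generated by $\gamma((M')^T)$ coincides with that generated by $\gamma(M^T)$. On the other side, scaling the $i$-th row of a rank $n-1$, $n\times(n-1)$ submatrix of $M$ by $\ell_i$ multiplies its $i$-th $(n-1)$-minor by $L/\ell_i$; the coprimality built into the $\ell_i$'s forces $\gcd_i\bigl((L/\ell_i)g_i(M)\bigr)=1$, so $g_i(M')=(L/\ell_i)g_i(M)$, a form of the expected degree $a_i+u-u_i$.

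The key verification is that $M'$ is a presentation matrix, via Theorem \ref{pres}. The depth condition is inherited because the ideal $J$ is unchanged. For the cofactor condition I would invoke the adjugate-product rule, which in the paper's cofactor convention reads $(DM)^C=D^C M^C$, combined with Lemma \ref{lcof} applied to $M$ (giving $M^C_{ij}=u\,g_i(M)h_j(M)$ with $u$ a unit, since $M$ is a presentation matrix), and with $D^C$ being diagonal with entries $L/\ell_i$. The combination yields
\[
(M')^C_{ij}=\frac{L}{\ell_i}\cdot u\,g_i(M)h_j(M)=u\,g_i(M')\,h_j(M'),
\]
so the unit constant is preserved and Theorem \ref{pres} applies.

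Once $M'$ is recognised as a presentation matrix, Theorem \ref{ris} provides the resolution $0\to R\to R^n\xrightarrow{M'}R^n\to R\to R/I_{M'}\to 0$ whose graded shifts are precisely the target sequence; minimality persists because each nonzero entry of the three maps has positive degree (the $\ell_i$'s with $u_i>0$ supply the positive degree in the middle map, the $g_i(M')$'s and $h_j$'s in the outer ones, and when $u_i=0$ the $i$-th row of $M'$ is already the minimal $i$-th row of $M$), and $\hgt I_{M'}\ge 2$ follows from $\gcd_i g_i(M')=1$. The main technical obstacle is handling the adjugate-product rule correctly in the paper's cofactor convention (a spurious transpose is easy to introduce) and keeping the genericity conditions transparent enough to rule out unwanted gcd cancellations; neither is deep, but both need care.
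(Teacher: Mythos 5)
Your construction is the same as the paper's in all essential respects: both proofs take a minimal presentation matrix $M$ realizing the given sequence, rescale its $i$-th row by a form of degree $u_i$, and then check via Theorem \ref{pres} that the result is again a minimal presentation matrix with $\gamma(M')=\bigl((L/\ell_1)g_1,\ldots,(L/\ell_n)g_n\bigr)$ and $\gamma((M')^T)=\gamma(M^T)$; your verification of the cofactor identity and of the depth condition is correct (indeed more detailed than the paper's, which simply cites Theorem \ref{pres}). The one point where you diverge is the choice of scaling forms, and it costs you something: you take $\ell_i=L_i^{u_i}$ for linear forms $L_i\in R$ chosen so that no $L_i$ divides any $g_j(M)$, and such a choice need not exist when $k$ is finite (all of the finitely many linear forms of $R$ may divide $g_1\cdots g_n$), whereas the paper's statement carries no hypothesis on $k$. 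The paper avoids this by passing to $S=R[y_1,\ldots,y_n]$ and scaling the $i$-th row by $y_i^{u_i}$: since the $y_i$ are fresh variables and the $g_j$ live in $R$, the coprimality you need (so that $\GCD_i\bigl((L/\ell_i)g_i\bigr)=1$ and the new resolution is minimal) is automatic, and the graded Betti numbers are insensitive to the polynomial extension. You can repair your version either by adopting that trick or by first reducing to an infinite field; as written, the genericity step is the only real gap.
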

\begin{proof}
By the assumptions there exists an ideal $I\subset R=k[x_1,\ldots,x_r],$ $\hgt I \ge 2,$ having a resolution of the type
 $$ 0\to R(-s)\to\bo_{j=1}^n R(-b_j)\xr{\varphi}\bo_{i=1}^n R(-a_i)\to R\to R/I\to 0,$$
where $\varphi(e_i)=(m_{1i},\ldots,m_{ni})$ and $(e_1,\ldots,e_n)$ is a basis of $\bo_{j=1}^n R(-b_j).$
Let $S=R[y_1,\ldots,y_n],$ where the $y_i$'s are new variables. 
Let $$\varphi':\bo_{j=1}^n S(-b_j-u)\to\bo_{i=1}^n S(-a_i-u+u_i)$$ be the map defined by 
$$\varphi'(e_i')=(m_{1i}y_1^{u_1},\ldots,m_{ni}y_n^{u_n}).$$
By Theorem \ref{pres} one sees that the matrix $M'=(m_{ij}y_j^{u_j})$ (matrix associated to $\varphi'$) is a minimal presentation matrix, so it defines an ideal $I_{M'},$ whose minimal free resolution looks like
 $$ 0\to S(-s-u)\to\bo_{j=1}^n S(-b_j-u)\xr{\varphi'}\bo_{i=1}^n S(-a_i-u+u_i)\to S\to S/I_{M'}\to 0.$$
Moreover if $I_M=(g_1,\ldots,g_n),$ then $I_{M'}=(g_1',\ldots,g_n'),$ where 
 $$g_i'=g_i\prod_{\stackrel {j=1}{j\ne i}}^ny_j^{u_j}.$$
\end{proof}

\begin{dfn}
We will say that a Betti sequence $(a_1,\ldots,a_n;b_1,\ldots,b_n;s)$ is {\em minimal} if the $n$ sequences
$(a_1,a_2-1\ldots,a_n-1;b_1-1,\ldots,b_n-1;s-1),$ $\ldots,$ $(a_1-1,a_2-1\ldots,a_n;b_1-1,\ldots,b_n-1;s-1)$
are not Betti sequences.
\end{dfn}

Of course, by Proposition \ref{bettiinc}, it is enough to find the minimal Betti sequences for determine all Betti sequences for ideals $I_M.$
\par
In order to give more information about Betti sequences for ideals $I_M,$ we give the following definition, which arises from perfect ideals of height $2$ (see \cite{G}).

\begin{dfn}
We will say that a sequence $(a_1 \le\ldots\le a_n;b_1 \ge\ldots\ge b_n;s)$ is a Gaeta sequence if 
$s=\sum_{j=1}^n b_j-\sum_{i=1}^n a_i$ and $b_{n+2-i}>a_i$ for $2\le i\le n.$
\end{dfn}

\begin{rem}
Of course not every Gaeta sequence is an essential Betti sequence. For instance the sequence
  $$(3,3,3,3;5,5,5,5;8)$$
is a Gaeta sequence. If it were an essential Betti sequence there should be an ideal $I\subset R=k[x,y,z],$ of height at least 
$2,$ having this Betti sequence.  But $H_{R/I}(6)=0,$ so $R/I$ is an Artinian algebra, therefore $R/I$ is a Gorenstein algebra of codimension $3,$ which is a contradiction because $I$ has an even number of minimal generators.
\end{rem}

Our aim is to understand when a Gaeta sequence is an essential Betti sequence.
\par
The next result will permit us to reduce the study of the essential Betti sequences to the Gaeta sequences.

\begin{thm}\label{noGaeta}
Let $(a_1 \le\ldots\le a_n;b_1 \ge\ldots\ge b_n;s)$ be a sequence such that $b_{n+2-t}\le a_t$ for some $t\ge 4.$ We set
$d=\sum_{i=t}^n(b_{n+1-i}-a_i).$ It is an essential Betti sequence iff 
\begin{itemize}
\item [1)] $s=\sum_{j=1}^n b_j-\sum_{i=1}^n a_i;$
\item [2)] $(a_1-d,\ldots ,a_{t-1}-d;b_{n+2-t}-d,\ldots b_n-d;s-d)$ is an essential Betti sequence;
\item [3)] $b_{n+1-i}>a_i$ for $i\ge t.$
\end{itemize}
\end{thm}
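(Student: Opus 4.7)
The plan is to prove the two directions separately.

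For the forward (``only if'') direction, assume the sequence is realized by a minimal free resolution with presentation matrix $M$. Condition (1) is immediate from the Hilbert--series/exactness computation. Condition (3) is the general inequality $a_i<b_{n+1-i}$ already observed at the start of this section, which holds for any essential Betti sequence, so there is nothing extra to prove.

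The content lies in condition (2). Minimality of $M$ forces $m_{ij}=0$ whenever $d_{ij}=b_j-a_i\le 0$. The hypothesis $b_{n+2-t}\le a_t$ combined with the orderings $a_1\le\ldots\le a_n$ and $b_1\ge\ldots\ge b_n$ gives $d_{ij}\le b_{n+2-t}-a_t\le 0$ for every $i\ge t$ and $j\ge n+2-t$. Consequently the lower-right $(n-t+1)\times(t-1)$ block of $M$ is identically zero, so (after permuting basis elements) we may write
$$M=\begin{pmatrix}M_1 & M_2\\M_3 & 0\end{pmatrix},$$
with $M_2$ square of size $(t-1)\times(t-1)$. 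The entries of $M_2$ have degrees $b_j-a_i$ matching exactly those required by the reduced Betti sequence $(a_1-d,\ldots,a_{t-1}-d;b_{n+2-t}-d,\ldots,b_n-d;s-d)$; the uniform shift $d=\sum_{i=t}^n(b_{n+1-i}-a_i)$ is precisely the sum of ``anti-diagonal'' degrees collected from the $M_3$ and $M_2$ columns. The strategy is then to apply Theorem \ref{pres} to $M$ (the depth condition on $\gamma(M^T)$ and the cofactor identity $M^C=u(g_ih_j)$) and to use the block decomposition together with the uniqueness of $\gamma(\cdot)$ up to a unit (Lemma \ref{pol}) to verify that $M_2$ in turn satisfies the hypotheses of Theorem \ref{pres}. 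This produces a generalized Gorenstein ideal $I_{M_2}$ realizing the reduced Betti sequence, establishing (2).

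For the converse, assume (1), (2), (3). Let $M_0$ be a minimal presentation matrix realizing the reduced essential Betti sequence, guaranteed by (2). We build an $n\times n$ matrix $M$ of the block form above by placing $M_0$ as the block $M_2$ and filling in $M_1,M_3$ via a Koszul-type pattern involving $n-t+1$ fresh forms of the positive degrees $b_{n+1-i}-a_i$ (for $i\ge t$); positivity is exactly condition (3), and ensures both $d>0$ and that every new entry of $M$ has strictly positive degree, preserving minimality. Theorem \ref{pres} is then applied to $M$: $\depth(\gamma(M^T))\ge 3$ is inherited from the regular-sequence structure of the fresh forms, and the cofactor identity $M^C=u(g_ih_j)$ propagates block-wise from the corresponding identity for $M_0$. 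The uniform shift by $d$ between reduced and original sequences is absorbed cleanly using Proposition \ref{bettiinc}.

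The main obstacle, in both directions, is the clean handling of the degree shift $d$: extracting $M_2$ as a bona fide minimal presentation matrix in the forward direction, and inserting $M_0$ into the extended block matrix in the converse direction without breaking the depth hypotheses of Theorem \ref{pres}. Both steps rely on Lemma \ref{pol} (uniqueness of $\gamma$ up to a unit), on Theorem \ref{pres} (cofactor identity and depth criterion), and on a careful degree accounting using $d=\sum_{i=t}^n(b_{n+1-i}-a_i)$.
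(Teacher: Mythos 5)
Your overall outline --- the vanishing lower-right block, the extraction of the $(t-1)\times(t-1)$ corner in one direction and the block-wise construction in the other --- coincides with the paper's, but in both directions the decisive verifications are either missing or misattributed, so as written this is a statement of strategy rather than a proof.

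In the forward direction you must actually show that the lower-left block $M_3$ is nonsingular, that $\rank M_2=t-2$, and that $\gamma(M_2)$ is proportional to $(g_1,\ldots,g_{t-1})$ with proportionality factor $\det M_3$, whose degree is exactly $d=\sum_{i=t}^n(b_{n+1-i}-a_i)$; this identity is the entire source of the shift $d$ and of the generator degrees $a_i-d$, and you never derive it. The paper then certifies that $M_2$ is a presentation matrix not via Theorem \ref{pres} but via Proposition \ref{carm}: any syzygy on $\gamma(M_2)$ is a syzygy on $(g_1,\ldots,g_n)$, hence lies in the column space of $M$, and the nonvanishing of $\det M_3$ forces it into the span of the last $t-1$ columns. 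Your plan of ``verifying the hypotheses of Theorem \ref{pres} for $M_2$'' would require controlling $\gamma(M_2^T)$ and the cofactor matrix of $M_2$, and you do not indicate how to extract either from the data for $M$. In the converse direction two of your claims are wrong as stated: $\depth(\gamma(M^T))\ge 3$ is \emph{not} inherited from the fresh forms --- the first $n+1-t$ components of $\gamma(M^T)$ vanish and the ideal it generates coincides with the one coming from $M_0$, so the depth comes from the inner matrix, not from the new variables; and Proposition \ref{bettiinc} cannot ``absorb'' the shift $d$, since it preserves the number of generators while here the sequence length changes from $t-1$ to $n$ (the shift appears automatically once $\gamma(M)$ is computed explicitly for the bidiagonal filling, which is how the paper handles it). Finally, showing that the factor $u$ in $M^C=u(g_ih_j)$ is a unit requires exhibiting at least one cofactor explicitly; ``propagates block-wise'' does not substitute for that computation.
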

\begin{proof}
Let us suppose that $(a_1 \le\ldots\le a_n;b_1 \ge\ldots\ge b_n;s)$ is an essential Betti sequence, so there is an ideal $I,$ 
$\hgt I\ge 2,$ such that the graded minimal free resolution of $R/I$ looks like 
 $$0\to R(-s)\to\bo_{j=1}^n R(-b_j)\to\bo_{i=1}^n R(-a_i)\to R.$$
The conditions $1$ and $3$ are well known for general facts.
Let $M=(m_{ij})$ be a matrix associated to the central map of the resolution, such that $\deg m_{ij}=b_j-a_i.$
Let $(g_1,\ldots,g_n)=\gamma(M).$
Since $b_{n+2-t}\le a_t,$ $m_{ij}=0$ for $i\ge t$ and $j\ge n+2-t.$ Let $M'=(m_{ij})$ be the square submatrix of $M$ where $1\le i\le t-1$ and $n+2-t\le j\le n$ and $D=(m_{ij})$ where $t\le i\le n$ and $1\le j\le n+1-t.$ 
Note that $\rank M'\le t-2$ since  $(g_1 \ldots g_{t-1})M'=0.$ Furthermore $\rank M'\ge t-2$ since $\rank M=n-1.$ So $\rank M'=t-2.$
Moreover $\det D\ne 0.$ Indeed, because of the vanishing of the maximal minors of the submatrix of $M$ obtained by removing a column $C_j$ with $1\le j\le n+1-t,$ there is a column $C_k$ with $n+2-t\le k\le n$ such that the maximal minors of the submatrix of $M$ obtained by removing $C_k$ are not vanishing multiple of $g_1,\ldots,g_n.$ Since such minors are multiple of $\det D$ we get that $\det D\ne 0.$ Consequently $\gamma(M')=(\det D)^{-1}(g_1,\ldots,g_{t-1}).$ Now since every syzygy on $\gamma(M')$ is also a syzygy on 
$g_1,\ldots,g_n$ and since $\det D\ne 0$ this syzygy must be in the span of $C_j$ for $n+2-t\le j\le n.$ So by 
Proposition \ref{carm}, $M'$ is a presentation matrix and the Betti sequence of $I_{M'}$ is 
$(a_1-d,\ldots ,a_{t-1}-d;b_{n+2-t}-d,\ldots b_n-d;s-d).$
\par
Conversely we suppose that the conditions $1,$ $2$ and $3$ are satisfied. In particular, by condition $2$ there exists a presentation matrix $M'=(m'_{ij}),$ with $\deg m'_{ij}=(b_{n+1-t-j}-d)-(a_{i}-d)=b_{n+1-t-j}-a_{i}$ of size $t-1$ such that $I_{M'}$ has Betti sequence $(a_1-d,\ldots ,a_{t-1}-d;b_{n+2-t}-d,\ldots b_n-d;s-d).$ Now we define a square matrix $M=(m_{ij}),$ of size $n,$ in the following way
 $$m_{ij}=\left\{
\begin{array}{ll}
m'_{i,j-(n+1-t)} & \text{for } 1\le i\le t-1,\, n+2-t\le j\le n \\
y_i^{b_j-a_i} & \text{for } i+j=n,\, t-1\le i\le n-1 \\
z_i^{b_j-a_i} & \text{for } i+j=n+1,\, t\le i\le n \\
0 & \text{elsewhere}
\end{array} \right. ,
 $$
where $y_j$ and $z_j$ are new variables for every $j.$ The condition $3$ guarantees that the exponents of $y_j$ and $z_j$ are positive integers. Since $\rank M'=t-2,$ we have $\rank M=n-1.$ We set $(g'_1,\ldots,g'_{t-1})=\gamma(M').$ Now if we set 
$(g_1,\ldots,g_n)=\gamma(M),$ we see that $g_i=g'_i\prod_{i=t}^n z_i^{b_{n-i}-a_i}$ for $1\le i\le t-1$ and 
$g_i=g'_{t-1}\prod_{h=t-1}^{i-1}y_h^{b_{n-h}-a_h}\prod_{h=i+1}^{n}z_h^{b_{n+1-h}-a_h}$ for $t\le i\le n.$
Note that $\deg g_i=a_i$ for $1\le i\le n.$ In order to show that $M$ is a minimal presentation matrix, we will use 
Theorem \ref{pres}. We set $(h'_1,\ldots,h'_{t-1})=\gamma(M'^T).$ By Theorem \ref{pres}, the ideal $J'$ generated by the components of $\gamma(M'^T)$ has $\depth J'\ge 3.$ Since $\gamma(M^T)=(0,\ldots,0,h'_1,\ldots,h'_{t-1}),$ the ideal $J$ generated by the components of $\gamma(M^T)$ coincides with $J',$ so it has depth greater than or equal to $3$ too. By Lemma \ref{lcof}, 
$M^C=u\gamma(M)^T\gamma(M^T).$ So we need only to show that $u$ is a unit. To do this we compute the cofactor $M_{1n}$ of the entry in position $(1,n).$ 
 $$M_{1n}=(-1)^{n+1}g'_1h'_{t-1}\prod_{i=t}^n z_i^{b_{n-i}-a_i}=(-1)^{n+1}g_1h'_{t-1};$$
since $h'_{t-1}$ is the $n$-th component of $\gamma(M^T),$ we are done.
\end{proof}

\begin{cor}\label{t4}
Let $(a_1 \le\ldots\le a_n;b_1 \ge\ldots\ge b_n;s)$ be a sequence such that $b_{n-2}\le a_4.$  We set
$d=\sum_{i=4}^n(b_{n+1-i}-a_i).$ It is an essential Betti sequence iff 
\begin{itemize}
\item [1)] $s=\sum_{j=1}^n b_j-\sum_{i=1}^n a_i;$
\item [2)] $a_j+b_{n-3+j}+d\le a_1+a_2+a_3< b_{n-1}+b_n+d,$ for $j=1,2,3.$
\item [3)] $b_{n+1-i}>a_i$ for $i\ge 4.$
\end{itemize}
\end{cor}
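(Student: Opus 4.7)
The plan is to derive Corollary \ref{t4} as a direct instance of Theorem \ref{noGaeta} specialised to $t=4$, followed by an application of the $n=3$ classification in Theorem \ref{treB} to dispose of the reduced three-generator subsequence that the theorem produces.

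First I would observe that the hypothesis $b_{n-2}\le a_4$ is literally the hypothesis $b_{n+2-t}\le a_t$ of Theorem \ref{noGaeta} for $t=4$. So the corollary's sequence is essential Betti if and only if
\begin{itemize}
\item[(a)] $s=\sum_{j=1}^n b_j-\sum_{i=1}^n a_i$;
\item[(b)] $(a_1-d,a_2-d,a_3-d;\,b_{n-2}-d,b_{n-1}-d,b_n-d;\,s-d)$ is an essential Betti sequence;
\item[(c)] $b_{n+1-i}>a_i$ for $i\ge 4$.
\end{itemize}
Items (a) and (c) are items (1) and (3) of Corollary \ref{t4}, so the task reduces to showing that (b) is equivalent to the two inequalities packaged in item (2).

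Next I would apply Theorem \ref{treB} to the reduced length-three sequence in (b), with $a_i'=a_i-d$, $b_1'=b_{n-2}-d$, $b_2'=b_{n-1}-d$, $b_3'=b_n-d$, $s'=s-d$ (note that the monotonicity requirements $a_1'\le a_2'\le a_3'$ and $b_1'\ge b_2'\ge b_3'$ are automatic from the original ordering). Theorem \ref{treB} then asks for three things: $s'=\sum b_j'-\sum a_i'$, the strict inequality $\sum a_i'<b_2'+b_3'$, and $a_j'+b_j'\le\sum a_i'$ for $j=1,2,3$.

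The main verification, and the only place where a little care is needed, is that the equality $s'=\sum b_j'-\sum a_i'$ is automatic. Unfolding the definition
\[
 d=\sum_{i=4}^n(b_{n+1-i}-a_i)=(b_1+\cdots+b_{n-3})-(a_4+\cdots+a_n),
\]
one gets $s-d=(b_{n-2}+b_{n-1}+b_n)-(a_1+a_2+a_3)$ as a consequence of (a), which is exactly the required identity for the reduced sequence (the shifts by $d$ cancel). After this bookkeeping, the inequality $\sum a_i'<b_2'+b_3'$ becomes $a_1+a_2+a_3<b_{n-1}+b_n+d$, i.e.\ the right-hand strict inequality in item (2), while $a_j'+b_j'\le\sum a_i'$ for $j=1,2,3$ translates (after cancelling the $d$-shifts) to $a_j+b_{n-3+j}+d\le a_1+a_2+a_3$ for $j=1,2,3$, which is the left-hand inequality in item (2). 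Collecting these equivalences gives the corollary.

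The only real obstacle is keeping the index shifts between the original length-$n$ sequence, the reduced length-$3$ sequence, and the $d$-shift consistent; once this bookkeeping is performed the corollary is a mechanical consequence of Theorems \ref{noGaeta} and \ref{treB}.
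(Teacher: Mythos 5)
Your proposal is correct and follows exactly the paper's route: specialize Theorem \ref{noGaeta} to $t=4$ and then translate the three conditions of Theorem \ref{treB} for the reduced length-three sequence, which is precisely what the paper does (your version just spells out the $d$-shift bookkeeping that the paper leaves implicit). The index translations you carry out ($b_j'=b_{n-3+j}-d$, cancellation of the $d$'s) all check out.
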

\begin{proof}
According to Theorem \ref{noGaeta}, we need to show that $(a_1-d,a_2-d,a_3-d;b_{n-2}-d,b_{n-1}-d,b_{n}-d;s-d)$ is a essential Betti sequence. Now it is enough to use Theorem \ref{treB} to verify this fact.
\end{proof}

\begin{rem}\label{remga}
Note that by iterating the procedure of Theorem \ref{noGaeta} any sequence $\beta=(a_1 \le\ldots\le a_n;b_1 \ge\ldots\ge b_n;s)$
can be transformed in a Gaeta sequence $\beta'=(a'_1 \le\ldots\le a'_m;b'_1 \ge\ldots\ge b'_m;s').$ 
\end{rem}

\begin{cor}
Let $\beta=(a_1 \le\ldots\le a_n;b_1 \ge\ldots\ge b_n;s)$ be a sequence. Using the same notation of Remark \ref{remga},
$\beta$ is an essential Betti sequence iff the Gaeta sequence $\beta'$ is an essential Betti sequence.
\end{cor}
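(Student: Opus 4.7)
The plan is to argue by induction on $n$, the length of $\beta$, using Theorem \ref{noGaeta} as the single-step reduction.

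If $\beta$ is itself a Gaeta sequence, then $\beta' = \beta$ and there is nothing to prove. Otherwise, $\beta$ violates the Gaeta condition at some index, i.e., there is an $i \ge 2$ with $b_{n+2-i} \le a_i$. I would use the fact, recalled at the start of Section 4, that every essential Betti sequence satisfies $a_2 < b_n$ and $a_3 < b_{n-1}$; these inequalities are manifestly inherited by $\beta'$ and, because of the shape of the reduction formula, they persist at every intermediate stage. Consequently the offending index can always be taken $\ge 4$, which is precisely the hypothesis of Theorem \ref{noGaeta}.

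With $t = i$, Theorem \ref{noGaeta} yields the shorter sequence
\[
\beta_1 = (a_1 - d, \ldots, a_{t-1} - d;\; b_{n+2-t} - d, \ldots, b_n - d;\; s - d),
\]
where $d = \sum_{i=t}^n (b_{n+1-i} - a_i)$, and asserts the equivalence: $\beta$ is essential iff $\beta_1$ is essential and the auxiliary conditions (1) and (3) hold. Condition (1) is only the defining arithmetic relation between $s$ and the remaining degrees, while condition (3) is a case of the general inequality $a_i < b_{n+1-i}$ valid for every essential Betti sequence; both are therefore automatic in our setting. Applying the induction hypothesis to $\beta_1$ (which has length $t-1 < n$), $\beta_1$ is essential iff its Gaeta reduction $\beta_1'$ is. The iteration producing $\beta'$ from $\beta$ passes through $\beta_1$, so $\beta' = \beta_1'$, and chaining the equivalences gives the corollary.

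The main obstacle, I expect, is the bookkeeping needed to confirm that the auxiliary conditions (1) and (3) of Theorem \ref{noGaeta} are genuinely automatic at every level of the recursion, and to ensure that the iteration terminates at a bona fide Gaeta sequence rather than stalling at a sequence where no further reduction is applicable. Both points are handled by checking that the inequalities $a_i < b_{n+1-i}$ for $1 \le i \le n$, together with $a_2 < b_n$ and $a_3 < b_{n-1}$, are preserved by the reduction formula defining $\beta_1$; this is a direct inspection of the substitutions $a_j \mapsto a_j - d$, $b_j \mapsto b_j - d$, $s \mapsto s - d$ combined with the strict inequality in condition (3) of Theorem \ref{noGaeta}.
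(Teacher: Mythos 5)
Your strategy---induction on the length of the sequence, with Theorem \ref{noGaeta} as the one-step reduction---is exactly what the paper does: its proof of this corollary is literally the one sentence ``taking into account Remark \ref{remga}, it is an easy application of Theorem \ref{noGaeta}.'' In the forward direction ($\beta$ essential $\Rightarrow$ $\beta'$ essential) your reasoning is complete: the inequalities you invoke ($a_2<b_n$, $a_3<b_{n-1}$, $a_i<b_{n+1-i}$) do hold for an essential Betti sequence, so the hypotheses of Theorem \ref{noGaeta} are met at every stage and the ``only if'' half of that theorem hands you the essentiality of $\beta_1$, hence of $\beta'$ by induction.

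The weak point is the converse direction, where you must verify conditions (1) and (3) of Theorem \ref{noGaeta} for $\beta$ \emph{without} yet knowing that $\beta$ is essential; your justification (``condition (3) is a case of the general inequality $a_i<b_{n+1-i}$ valid for every essential Betti sequence'') is circular at precisely that point. Condition (1) really is automatic: writing $d=\sum_{j=1}^{n+1-t}b_j-\sum_{i=t}^{n}a_i$, one checks that $s=\sum b_j-\sum a_i$ for $\beta$ is equivalent to the analogous identity for $\beta_1$, which holds because $\beta_1$ is essential by induction. Condition (3), however, concerns exactly the entries $b_1,\ldots,b_{n+1-t}$ and $a_t,\ldots,a_n$ that are discarded in passing to $\beta_1$, so no information about $\beta_1$ or $\beta'$ can recover it: a sequence can violate condition (3) while its reduction is essential, and such a $\beta$ is then \emph{not} essential by the necessity half of Theorem \ref{noGaeta}. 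The same caveat applies to your claim that the offending index can always be taken $\ge 4$. The repair is to build these requirements into the reduction procedure itself (which is what Remark \ref{remga} tacitly assumes): one performs the reduction only when $t\ge 4$ and condition (3) hold, and if either fails at some stage one concludes directly that $\beta$ is not essential. The paper leaves all of this implicit, so your proposal is no less rigorous than the published one-line proof, but ``both are therefore automatic'' is the one assertion that does not close as written.
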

\begin{proof}
Taking into account Remark \ref{remga}, it is an easy application of Theorem \ref{noGaeta}.
\end{proof}

Now we study the essential Betti sequences of the type
  $$(a,\ldots,a;b,\ldots,b;s).$$
	
\begin{dfn}
Let $M$ be a square matrix of size $n.$ The matrix $M=(m_{ij})$ is said to be {\em bidiagonal} iff $m_{ij}=n_{ij}=0$ for $j\ne i,i+1,$ $1\le i\le n$ 
(here $m_{n,n+1}$ means $m_{n1}$).
\end{dfn}

\begin{lem}\label{prodmat}
Let $S=k[x_1,\ldots,x_r,y_1,\ldots,y_s].$ Let $M=(m_{ij})$ and $N=(n_{ij})$ be two minimal presentation bidiagonal square matrices of size $n$ such that 
$m_{ij}$ are forms of degree $d$ in $k[x_1,\ldots,x_r],$ $n_{ij}$ are forms of degree $e$ in $k[y_1,\ldots,y_s].$ 
Let $M*N=(t_{ij})$ be the matrix such that $t_{ii}=m_{ii}n_{ii}$ and $t_{i,i+1}=-m_{i,i+1}n_{ii+1}$ and $t_{ij}=0$ otherwise.
\par
Then $M*N$ is a presentation bidiagonal matrix. Moreover if $\gamma(M^T)=(h_1,\ldots,h_n)$ and $\gamma(N^T)=(k_1,\ldots,k_n)$ then 
$\gamma((M*N)^T)=(h_1k_1,\ldots,h_nk_n).$
\end{lem}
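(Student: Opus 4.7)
The plan is to apply Theorem \ref{pres} to $M*N$, which is bidiagonal by construction. Setting $\gamma(M)=(g_1,\ldots,g_n)$ and $\gamma(N)=(g'_1,\ldots,g'_n)$, I would carry out three steps: compute $\gamma((M*N)^T)$ and $\gamma(M*N)$; verify the cofactor identity of Theorem \ref{pres}; verify the depth bound $\depth(h_1k_1,\ldots,h_nk_n)\ge 3$.

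For the first step I exploit the bidiagonal form: $Mh^T=0$ reads, at row $i$, $m_{ii}h_i=-m_{i,i+1}h_{i+1}$ (indices cyclic), and analogously $n_{ii}k_i=-n_{i,i+1}k_{i+1}$. Multiplying these term by term yields $m_{ii}n_{ii}(h_ik_i)=m_{i,i+1}n_{i,i+1}(h_{i+1}k_{i+1})$, which, using $t_{ii}=m_{ii}n_{ii}$ and $t_{i,i+1}=-m_{i,i+1}n_{i,i+1}$, is exactly $t_{ii}(h_ik_i)+t_{i,i+1}(h_{i+1}k_{i+1})=0$; the two minus signs pair up and cancel. Hence $(h_1k_1,\ldots,h_nk_n)^T\in\Ker(M*N)$. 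Since $h_i\in k[\xu]$ and $k_i\in k[\yu]$ live in disjoint-variable subrings with $\GCD(h_i)=\GCD(k_i)=1$, we get $\GCD(h_1k_1,\ldots,h_nk_n)=1$ in $S$, and Corollary \ref{sist} then identifies this tuple with $\gamma((M*N)^T)$ up to a unit. The symmetric left-null-vector computation gives $\gamma(M*N)=(g_1g'_1,\ldots,g_ng'_n)$.

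For the cofactor identity, each $(i,j)$-cofactor of a bidiagonal cyclic matrix is a single signed monomial in the entries (after removing row $i$ and column $j$ the minor still has at most one nonzero entry per row and column), and a direct sign check yields $(M*N)^C_{ij}=M^C_{ij}\cdot N^C_{ij}$: the minus signs coming from entries $t_{a,a+1}$ appear an even number of times in each such cofactor. By Lemma \ref{lcof} combined with Theorem \ref{pres} applied to the minimal presentation matrices $M$ and $N$, we have $M^C=\alpha(g_ih_j)$ and $N^C=\beta(g'_ik_j)$ with $\alpha,\beta$ units of $S$. Therefore $(M*N)^C=\alpha\beta\,(g_ig'_i\cdot h_jk_j)$, and $u=\alpha\beta$ is the required unit.

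The main obstacle is the depth bound. Since $S$ is Cohen--Macaulay, depth equals height, and any minimal prime $P$ of $J=(h_1k_1,\ldots,h_nk_n)$ satisfies, for each $i$, $h_i\in P$ or $k_i\in P$; writing $S_0=\{i:h_i\in P\}$ and $T_0=\{i:k_i\in P\}$ with $S_0\cup T_0=\{1,\ldots,n\}$, the disjoint-variable structure gives $\hgt P\ge\hgt_{k[\xu]}(h_{S_0})+\hgt_{k[\yu]}(k_{T_0})$. One must show this sum is at least $3$ for every admissible $(S_0,T_0)$: Krull's theorem (combined with Theorem \ref{pres} applied to $M$ and $N$ separately) gives $\hgt(h_{S_0})\ge 3-|T_0|$ and $\hgt(k_{T_0})\ge 3-|S_0|$, which suffices when $|S_0|=n$, $|T_0|=n$, or $|T_0|=1$; the remaining cases require the explicit product form $h_j=\pm\prod_{i<j}m_{ii}\prod_{j\le i<n}m_{i,i+1}$ (and analogously for $k_j$) coming from the bidiagonal structure to sharpen the height estimate. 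Once this depth check is in place, Theorem \ref{pres} delivers the conclusion.
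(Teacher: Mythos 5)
Your overall strategy is the paper's: verify the two conditions of Theorem \ref{pres} for $M*N$, namely the cofactor identity and the depth bound on the ideal generated by $\gamma((M*N)^T)$. Your term-by-term multiplication of the recursions $m_{ii}h_i=-m_{i,i+1}h_{i+1}$ and $n_{ii}k_i=-n_{i,i+1}k_{i+1}$ is a clean substitute for the paper's appeal to the cofactor computation in \cite{RZ3}, and the identification $(M*N)^C=\alpha\beta(g_ig'_i\,h_jk_j)$ is correct. Two smaller issues there: to invoke Corollary \ref{sist} you must first know $\rank(M*N)=n-1$ (a kernel vector only gives $\rank\le n-1$; one nonzero cofactor, e.g.\ the $(1,1)$ one, which equals $\prod_{i\ge2}t_{ii}$, settles it); and your stated reason why each cofactor of a cyclic bidiagonal matrix is a single signed monomial is false as written --- deleting a row and a column can leave rows with two nonzero entries --- the correct reason is that the bipartite support graph is a $2n$-cycle, so removing one row-vertex and one column-vertex leaves a union of paths, which admits at most one perfect matching.

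The genuine gap is the depth bound, which you yourself call the main obstacle and then leave open. The reduction to showing $\hgt(h_{S_0})+\hgt(k_{T_0})\ge 3$ for every cover $S_0\cup T_0=\{1,\ldots,n\}$ is the right formulation, but most covers are deferred to a ``sharpened estimate'' based on the formula $h_j=\pm\prod_{i<j}m_{ii}\prod_{j\le i<n}m_{i,i+1}$, and that formula is not correct: those products are only a common multiple of the components of $\gamma(M^T)$, which by Lemma \ref{pol} are obtained only after dividing out their GCD. The paper's own example makes this concrete: for the bidiagonal presentation matrix of $(xyz,yzt,ztu,tuv,uvx,vxy)$ one has $\gamma(M^T)=(uv,vx,xy,yz,zt,tu)$, with components of degree $2$ rather than $n-1=5$. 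Indeed, if the components really were those full products, then $m_{11}$ would divide $h_2,\ldots,h_n$, and the cover $S_0=\{2,\ldots,n\}$, $T_0=\{1\}$ would produce a height-two prime (it would even force $\hgt(h_1,\ldots,h_n)\le 2$, contradicting the hypothesis that $M$ is a minimal presentation matrix). So the tool you propose for the hard cases is not available in the form you state, and the depth verification remains unproved. (For comparison, the paper disposes of this step in one sentence, asserting that because the $h_i$ and $k_i$ live in disjoint sets of variables the ideal $(h_1k_1,\ldots,h_nk_n)$ has depth at least $3$; that assertion is itself terser than one would like, but your replacement for it does not close the gap.)
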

\begin{proof}
Note that 
 $$\det (M*N)=\prod_{i=1}^n t_{ii}+(-1)^{n+1}\prod_{i=1}^nt_{i,i+1}=$$
$$=\prod_{i=1}^n m_{ii}n_{ii}+(-1)^{n+1}(-1)^n\prod_{i=1}^n m_{i,i+1}n_{i,i+1}=$$
$$=\prod_{i=1}^n m_{ii}n_{ii}-\prod_{i=1}^n m_{i,i+1}n_{i,i+1}=0.$$
To show that $M*N$ is a minimal presentation matrix we use Theorem \ref{pres}. At first we need to compute the cofactors of $M*N.$ Such a computation can be found, for instance, in the paper \cite{RZ3} on page 281. From this computation follows immediately that  $\det (M*N)_{ij}=\det M_{ij}\det N_{ij}$ (where with the index $ij$ we mean the submatrix obtained by removing $i$-th row and $j$-th column). Consequently we get that $\gamma((M*N)^T)=(h_1k_1,\ldots,h_nk_n).$ Since $\gamma((M)^T)$ and $\gamma((N)^T)$ consisting of forms living in polynomial rings in different variables we deduce that the ideal generated by $\gamma((M*N)^T)$ has depth at least $3.$ From the same computation follows also that 
$(M*N)^C=\gamma((M*N))^T\gamma((M*N)^T).$ 
\end{proof}

\begin{prp}\label{omognec}
If a sequence $(a_1,\ldots,a_n;b_1,\ldots,b_n;s)$ of positive integers, with $a_1=\ldots=a_n=a,$ $b_1=\ldots=b_n=b,$ is a essential Betti sequence then
\begin{itemize}
\item [1)] $s=n(b-a);$
\item [2)] $na<(n-1)b \le (n+1)a;$ moreover, when $n$ is even, $(n-1)b < (n+1)a.$ 
\end{itemize}
\end{prp}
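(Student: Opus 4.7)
The plan is to read both conclusions directly off the Hilbert series of $R/I_M$. The first identity $s = n(b-a)$ is immediate from the alternating sum of shifts in the resolution (\ref{gr}). The lower bound $na < (n-1)b$ in (2) follows from minimality: the leftmost map $R(-s) \to R(-b)^n$ is multiplication by $(h_1,\ldots,h_n)^T$ with $\deg h_j = s - b$, and for the resolution to be minimal none of the $h_j$ may be a unit, so $s - b > 0$; substituting $s = n(b-a)$ rearranges to $(n-1)b > na$.

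For the upper bound, I would exploit the Hilbert series
\[
H_{R/I_M}(t) \;=\; \frac{f(t)}{(1-t)^r}, \qquad f(t) := 1 - nt^a + nt^b - t^s.
\]
A routine check gives $f(1) = 0$ and $f'(1) = n(b-a) - s = 0$, so $(1-t)^2$ divides $f(t)$; write $f(t) = (1-t)^2 P(t)$ with $P \in \mathbb{Z}[t]$. Two applications of L'Hopital then yield
\[
P(1) \;=\; \tfrac{1}{2}f''(1) \;=\; \tfrac{s}{2}\bigl((n+1)a - (n-1)b\bigr).
\]
Now let $h := \hgt I_M$, which lies in $\{2,3\}$ by the remark after Theorem \ref{ris}. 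Writing $H_{R/I_M}(t) = Q(t)/(1-t)^{r-h}$ with $Q \in \mathbb{Z}[t]$ and $Q(1) > 0$ (positivity of the multiplicity of the non-zero module $R/I_M$), comparison with $P(t)/(1-t)^{r-2}$ forces $P(t) = Q(t)(1-t)^{h-2}$, whence $P(1) = Q(1) > 0$ when $h = 2$ and $P(1) = 0$ when $h = 3$. In either case $(n+1)a \ge (n-1)b$, which is the inequality claimed.

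The strict inequality in the even case then falls out for free: equality $(n-1)b = (n+1)a$ forces $P(1) = 0$, hence $h = 3$, and the remark after Theorem \ref{ris} excludes this when $n$ is even (a codimension-three Gorenstein ideal cannot have an even number of minimal generators). The only real obstacle I foresee is the bookkeeping in the $f''(1)$ computation and a careful invocation of positivity of the multiplicity; conceptually, the content of the proposition is that the Buchsbaum--Eisenbud parity constraint on Gorenstein codimension-three ideals is exactly what forces strictness in the even case.
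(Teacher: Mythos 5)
Your proof is correct and is in substance the paper's own argument: the quantity $P(1)=\tfrac{s}{2}\bigl((n+1)a-(n-1)b\bigr)$ you extract from the Hilbert series numerator is exactly the value $H_{R/I}(s-2)$ that the paper computes after reducing modulo a regular sequence of linear forms to three variables, and both proofs obtain strictness for even $n$ from the Buchsbaum--Eisenbud parity constraint on codimension-three Gorenstein ideals. The only cosmetic difference is that you phrase non-negativity via positivity of the multiplicity $Q(1)$ together with the dichotomy $\hgt I_M\in\{2,3\}$, whereas the paper phrases it as non-negativity of a Hilbert function value of the reduced quotient.
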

\begin{proof}
Let us suppose that $(a_1,\ldots,a_n;b_1,\ldots,b_n;s)$ with $a_1=\ldots=a_n=a,$ $b_1=\ldots=b_n=b,$ is a essential Betti sequence. Then there exists an ideal $I,$ $\hgt I\ge 2,$ whose resolution is
 $$0\to R(-s)\to R(-b)^n\to R(-a)^n \to R\to R/I\to 0.$$
The condition $1$ is trivial. Moreover, since $s>b,$ $n(b-a)>b,$ so $na<(n-1)b.$ \par
Since $\depth(R/I)=\depth(R)-3$ we can reduce to a ring in only $3$ variables. So we can suppose that $R=k[x_1,x_2,x_3].$ Of course we have that $H_{R/I}(s-2)\ge 0.$ Therefore
 $$0\le H_{R/I}(s-2)=\binom{s}{2}-n\binom{s-a}{2}+n\binom{s-b}{2}-\binom{0}{2}=$$
 $$=\frac{1}{2}n(b-a)[a(n+1)-b(n-1)]$$
that implies $(n-1)b\le (n+1)a.$ Moreover, when $n$ is even, since $R/I$ cannot be Gorenstein, hence it cannot be Artinian, so $H_{R/I}(s-2)>0,$ so for $n$ even we have $(n-1)b < (n+1)a.$
\end{proof}

\begin{thm}\label{omogdisp}
A sequence $(a_1,\ldots,a_n;b_1,\ldots,b_n;s)$ of positive integers, with $a_1=\ldots=a_n=a,$ $b_1=\ldots=b_n=b,$ $n$ odd is an essential Betti sequence iff
\begin{itemize}
\item [1)] $s=n(b-a);$
\item [2)] $na<(n-1)b \le (n+1)a.$
\end{itemize}
\end{thm}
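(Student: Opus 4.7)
The plan is as follows. Necessity is exactly Proposition \ref{omognec}, so I concentrate on sufficiency. Given $n$ odd and $(a,b,s)$ satisfying $s=n(b-a)$ and $na<(n-1)b\le (n+1)a$, set $d=b-a$; the inequalities become $(n-1)d/2\le a\le (n-1)d-1$, a non-empty interval of integers (note that $(n-1)d/2\in\zz$ since $n-1$ is even). The goal is to exhibit, in a sufficiently large polynomial ring, an $n\times n$ minimal presentation matrix $M$ with entries of uniform degree $d$ whose ideal $I_M$ realizes the prescribed graded Betti numbers.

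The construction splits into two cases. In the Gorenstein boundary case $2a=(n-1)d$, I would take $M$ to be a sufficiently generic alternating $n\times n$ matrix with entries of degree $d$; by Remark \ref{pfaff} (Buchsbaum--Eisenbud), the submaximal Pfaffians of such a matrix generate a height-$3$ Gorenstein ideal with generators in degree $(n-1)d/2=a$ and syzygies in degree $(n+1)d/2=b$, giving exactly the required resolution. Outside this boundary, when $2a>(n-1)d$, I would build $M$ by combining bidiagonal presentation matrices via Lemma \ref{prodmat}: if $M_1,\ldots,M_k$ are bidiagonal minimal presentation matrices in pairwise disjoint variable sets realizing invariants $(a_i,d_i)$, then their iterated $*$-product is again a bidiagonal minimal presentation matrix, realizing $\bigl(\sum a_i,\sum d_i\bigr)$. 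Decomposing the target $(a,d)$ as such a sum and taking the corresponding product yields~$M$.

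The principal obstacle is to exhibit enough ``atomic'' bidiagonal presentation matrices to cover, under $*$, the full admissible range of $(a,d)$. For a cyclic bidiagonal $n\times n$ matrix with diagonal entries $\alpha_i$ and super-diagonal entries $\beta_i$ (with $\beta_n$ in position $(n,1)$), Laplace expansion gives $\det M=\prod\alpha_i+(-1)^{n-1}\prod\beta_i$, which for $n$ odd reduces to $\prod\alpha_i+\prod\beta_i$; choosing the $\alpha_i,\beta_i$ as monomials of degree $d$ with $\prod\alpha_i=-\prod\beta_i$ (via an appropriate sign) forces $\det M=0$. The delicate part is to verify, via Theorem \ref{pres}, that such an $M$ is actually a minimal presentation matrix --- that $\depth J\ge 3$ for the ideal $J=(\gamma(M^T))$ and that the cofactor matrix factors as $M^C=u\,\gamma(M)^T\gamma(M^T)$ with $u$ a unit. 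Placing the $\alpha_i,\beta_i$ on sufficiently large disjoint families of variables, together with careful book-keeping of the degrees of $\gamma(M)$ and $\gamma(M^T)$, will supply the required base matrices for each valid $(a,d)$ outside the Gorenstein boundary and, after $*$-composition, complete the construction.
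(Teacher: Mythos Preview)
Your overall shape --- necessity from Proposition~\ref{omognec}, sufficiency by assembling bidiagonal presentation matrices via Lemma~\ref{prodmat} --- is close to what the paper does. But there is a genuine gap in the sufficiency argument as you have sketched it.

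Write $c=s-b$; then $a=(n-1)d-c$, and the admissible range is $1\le c\le \tfrac{n-1}{2}d$. Under the $*$-product, the invariant $c$ is \emph{additive}: if $M$ realizes $c_M$ and $N$ realizes $c_N$, then $M*N$ realizes $c_M+c_N$ (this is exactly the statement $\gamma((M*N)^T)=(h_ik_i)$ in Lemma~\ref{prodmat}). Since every minimal presentation matrix has $c\ge 1$, any $*$-product of $k\ge 2$ factors has $c\ge k\ge 2$. In particular, the extremal case $c=1$ (equivalently $a=(n-1)d-1$, the \emph{upper} endpoint of the admissible interval) cannot be reached by any nontrivial $*$-product, so for each $d\ge 2$ you would need a single atomic bidiagonal matrix realizing $(a,d)=((n-1)d-1,\,d)$. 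Your proposed monomial construction does not obviously produce such atoms, and your Gorenstein case via alternating matrices sits at the \emph{opposite} extreme $c=\tfrac{n-1}{2}d$, so it does not help here. More generally, with only $d=1$ atoms (where $c\in\{1,\dots,\tfrac{n-1}{2}\}$), a $d$-fold $*$-product reaches only $c\in[d,\tfrac{n-1}{2}d]$, leaving the whole range $1\le c\le d-1$ uncovered.

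The paper's proof closes exactly this gap with Proposition~\ref{bettiinc}: taking $u_i=1$ for all $i$ sends $(a,b)\mapsto(a+n-1,\,b+n)$, so $d\mapsto d+1$ while $c=s-b$ is \emph{unchanged}. Thus, by induction on $d$, the paper first uses Proposition~\ref{bettiinc} to cover the high-$a$ (small-$c$) portion of the range at level $d=h+1$ from level $h$, and then uses Lemma~\ref{prodmat} (an atom with $d=1$ times an inductively available matrix with $d=h$ and maximal $c$) to fill in the remaining low-$a$ portion. The base case $d=1$ is not constructed by hand but quoted from \cite[Corollary~2.11]{RZ3}. Your plan becomes correct once you insert Proposition~\ref{bettiinc} (or an equivalent device) to move along the line $c=\mathrm{const}$; without it, the $*$-product monoid simply does not reach the small-$c$ region.
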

\begin{proof}
The condition is necessary by Proposition \ref{omognec}.
\par
Conversely let $(a_1,\ldots,a_n;b_1,\ldots,b_n;s)$ with $a_1=\ldots=a_n=a,$ $b_1=\ldots=b_n=b,$ $n$ odd a sequence satisfying the conditions 
$1)$ and $2).$  
\par
By subtracting $(n-1)a,$ the condition $2)$ becomes $$\frac{n-1}{2}(b-a) \le  a \le (n-1)(b-a)-1.$$ 
Now we work by induction on $b-a.$ For $b-a=1$ our condition becomes $s=n$ and $\frac{n-1}{2}\le  a \le n-2.$ Using Corollary 2.11 in \cite{RZ3} we can produce an ideal $I,$ $\hgt I\ge 2,$ in $R$ such that $R/I$ has the requested Betti sequence.  Let us suppose that we have realized an algebra $R/I$ having the requested Betti sequence when $b-a=h$. We need to construct algebras $R/I$ with Betti sequence satisfying $\frac{(n-1)}{2}(h+1)\le a \le (n-1)(h+1)-1$ and $s=n(h+1).$ Using Proposition \ref{bettiinc} for $u_i=1$ for every $i,$ we realize the Betti sequences satisfying $s=n(h+1)$ and 
$\frac{(n-1)(h+2)}{2}\le a \le (n-1)(h+1)-1.$ So, it remains to build the Betti sequences such that 
$$s=n(h+1) \text{ and } \frac{(n-1)}{2}(h+1)\le a \le \frac{(n-1)}{2}(h+2)-1$$ 
i.e. $\frac{n-1}{2}h+1\le s-b \le\frac{n-1}{2}(h+1).$ We are interested on the integer $s-b$ since it is the degree of the components of vector $\gamma(M^T),$ where $M$ is the presentation matrix which we will use to realize these Betti sequences. Note that $h$ and $s-b$ determine all the Betti sequence. For $h=1$ we have realized every Betti sequence such that $1\le s-b \le \frac{n-1}{2},$ using bidiagonal matrices. Moreover, by the inductive hypothesis, we have also realized every Betti sequence such that $b-a=h$ and $1\le s-b \le h\frac{n-1}{2},$ using again bidiagonal matrices. Now let $1\le t \le \frac{n-1}{2}$ and let $M$ be a minimal presentation bidiagonal matrix realizing the Betti sequence such that $s=n,$ $b-a=1$ and $s-b=t.$ Let $N$ be a minimal presentation bidiagonal matrix realizing the Betti sequence such that $s=nh,$ $b-a=h$ and $s-b=h\frac{n-1}{2}.$ Applying Lemma \ref{prodmat} we get a matrix $M*N$ realizing the Betti sequence such that 
$s=n(h+1),$ $b-a=h+1$ and $s-b=h\frac{n-1}{2}+t.$
\end{proof}

\begin{prp}\label{omogparisuff}
A sequence $(a_1,\ldots,a_n;b_1,\ldots,b_n;s)$ of positive integers, with $a_1=\ldots=a_n=a,$ $b_1=\ldots=b_n=b,$ $n$ even is an essential Betti sequence provided that 
\begin{itemize}
\item [1)] $s=n(b-a);$
\item [2)] $na<(n-1)b \le na+\frac{n-2}{2}(b-a).$
\end{itemize}
\end{prp}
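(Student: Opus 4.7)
The plan is to proceed by induction on $h := b-a$, closely mirroring the strategy used for the odd case in Theorem \ref{omogdisp}. First I would reformulate the hypotheses as $s = nh$ together with $1 \le s-b \le \frac{n-2}{2}h$ (this comes from $s-b = (n-1)h-a$ applied to condition 2), and reduce the problem to producing, for each such pair $(h, s-b)$, a minimal \emph{bidiagonal} presentation matrix realizing the prescribed Betti sequence. The base case $h=1$ then amounts to $s-b \in \{1, 2, \ldots, \frac{n-2}{2}\}$, whose Betti sequences can be produced by bidiagonal presentation matrices with linear entries via Corollary 2.11 of \cite{RZ3}, exactly as invoked in the odd case.

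For the inductive step I would assume the claim holds at level $h$, and split the admissible range $1 \le s-b \le \frac{n-2}{2}(h+1)$ at level $h+1$ into two parts. On the sub-range $1 \le s-b \le \frac{n-2}{2}h$, apply Proposition \ref{bettiinc} with $u_i=1$ for every $i$: this sends $(a,b,s)$ to $(a+n-1, b+n, s+n)$, preserves the value of $s-b$, and preserves bidiagonality, since it merely multiplies each row by one new variable. On the complementary sub-range $\frac{n-2}{2}h + 1 \le s-b \le \frac{n-2}{2}(h+1)$, write $s-b = \frac{n-2}{2}h + t$ with $1 \le t \le \frac{n-2}{2}$, take a bidiagonal base-case matrix $M$ realizing $s-b = t$ at level $h=1$ in one set of variables, and take a bidiagonal inductive matrix $N$ realizing the extremal case $s-b = \frac{n-2}{2}h$ at level $h$ in disjoint variables. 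By Lemma \ref{prodmat}, the product $M \ast N$ is then a minimal bidiagonal presentation matrix whose entries have degree $h+1$ and whose $\gamma((M\ast N)^T)$-components have degree $t + \frac{n-2}{2}h$; this yields exactly the required Betti sequence at level $h+1$.

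The main obstacle is arithmetic bookkeeping rather than conceptual: one must verify that Part I and Part II together cover exactly the range $1 \le s-b \le \frac{n-2}{2}(h+1)$ without gaps, that the extremal case $s-b = \frac{n-2}{2}h$ at level $h$ is itself available from the induction hypothesis, and that the degrees in the $\ast$-product add up correctly to land on each missing value. The additive nature of the $\ast$-product, which combines $s-b$ values only by summation, is also precisely what forces the stated bound $\frac{n-2}{2}(b-a)$ and is the reason this proposition supplies only a sufficient condition, stopping short of the necessary bound furnished by Proposition \ref{omognec}.
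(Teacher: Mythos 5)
Your proposal follows essentially the same route as the paper's proof: the same reformulation in terms of $s-b$, the same induction on $b-a$ with base case from Corollary 2.11 of \cite{RZ3}, the same use of Proposition \ref{bettiinc} with $u_i=1$ to cover $1\le s-b\le \frac{n-2}{2}h$, and the same $M*N$ construction via Lemma \ref{prodmat} pairing a base-case bidiagonal matrix with the extremal inductive one to fill the remaining range. The arithmetic bookkeeping you flag does check out, so no gap.
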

\begin{proof}
At first we observe that the condition $2$ is equivalent to 
 $$\frac{n}{2}(b-a)\le a\le (n-1)(b-a)-1 \iff  1\le s-b \le \frac{n-2}{2}(b-a).$$
We proceed analogously to the proof of Theorem \ref{omogdisp}.
Now we work by induction on $b-a.$ For $b-a=1$ our conditions become $s=n$ and $\frac{n-2}{2}\le  a \le n-2.$ Using Corollary 2.11 in \cite{RZ3} we can produce an ideal $I$ in $R,$ $\hgt I\ge 2,$ such that $R/I$ has the requested Betti sequence.
\par
Let us suppose that we have realized an algebra $R/I$ having the requested Betti sequence when $b-a=h$. We need to construct algebras $R/I$ with Betti sequence satisfying $\frac{n}{2}(h+1)\le a \le (n-1)(h+1)-1$ and $s=n(h+1).$ Using Proposition \ref{bettiinc} for $u_i=1$ for every $i,$ we realize the Betti sequences satisfying $s=n(h+1)$ and 
$\frac{n}{2}h+n-1\le a \le (n-1)(h+1)-1.$ So, it remains to build the Betti sequences such that 
    $$s=n(h+1) \text{ and } \frac{n}{2}(h+1)\le a \le \frac{n}{2}h+n-2$$ 
 i.e. $\frac{n-2}{2}h+1\le s-b \le\frac{n-2}{2}(h+1).$ 
We are interested on the integer $s-b$ since it is the degree of the components of vector $\gamma(M^T),$ where $M$ is the presentation matrix which we will use to realize these Betti sequences. Note that $h$ and $s-b$ determine all the Betti sequence. For $h=1$ we have realized every Betti sequence such that $1\le s-b \le \frac{n-2}{2},$ using bidiagonal matrices. Moreover, by the inductive hypothesis, we have also realized every Betti sequence such that $b-a=h$ and $1\le s-b \le h\frac{n-2}{2},$ using again bidiagonal matrices. Now let $1\le t \le \frac{n-2}{2}$ and let $M$ be a minimal presentation bidiagonal matrix realizing the Betti sequence such that $s=n,$ $b-a=1$ and $s-b=t.$ Let $N$ be a minimal presentation bidiagonal matrix realizing the Betti sequence such that $s=nh,$ $b-a=h$ and $s-b=h\frac{n-2}{2}.$ Applying Lemma \ref{prodmat} we get a matrix $M*N$ realizing the Betti sequence such that 
$s=n(h+1),$ $b-a=h+1$ and $s-b=h\frac{n-2}{2}+t.$
\end{proof}

\begin{rem}
Unfortunately our construction does not allow building all the sequences satisfying the conditions of Proposition \ref{omognec}. For instance the sequence $(5,5,5,5;8,8,8,8;12)$ cannot be build with the tools of Proposition \ref{omogparisuff}. Nevertheless it is an essential Betti sequence. In fact, using Macaulay 2, one can verify that the ideal $I=(f_1,f_2,f_3,f_4)$ with
$$
 \begin{array}{l}
 f_1=-{x}_{3} {y}_{4} {y}_{5} {z}_{4} {z}_{5}-{y}_{1} {y}_{4} {y}_{5} {z}_{4} {z}_{6}+{x}_{3} {y}_{4} {y}_{5} {z}_{1} {z}_{8}+{y}_{1} {y}_{4} {y}_{5} {z}_{2} {z}_{8} \\
f_2={x}_{1} {x}_{2} {x}_{3} {z}_{4} {z}_{5}+{x}_{1} {x}_{2} {y}_{1} {z}_{4} {z}_{6}+{y}_{1} {y}_{2} {y}_{3} {z}_{4} {z}_{7}+ \\ 
\hspace{4cm} -{x}_{1} {x}_{2} {x}_{3} {z}_{1} {z}_{8} -{x}_{1} {x}_{2} {y}_{1} {z}_{2} {z}_{8}-{y}_{1} {y}_{2} {y}_{3} {z}_{3} {z}_{8} \\
f_3={x}_{3} {y}_{2} {y}_{3} {z}_{3} {z}_{5}+{y}_{1} {y}_{2} {y}_{3} {z}_{3} {z}_{6}-{x}_{3} {y}_{2} {y}_{3} {z}_{1} {z}_{7}-{y}_{1} {y}_{2} {y}_{3} {z}_{2} {z}_{7} \\
f_4=-{x}_{1} {x}_{2} {x}_{3} {z}_{3} {z}_{5}-{x}_{1} {x}_{2} {y}_{1} {z}_{3} {z}_{6}+{x}_{1} {x}_{2} {x}_{3} {z}_{1} {z}_{7}+ \\
\hspace{4cm} +{x}_{1} {x}_{2} {y}_{1} {z}_{2} {z}_{7}+{x}_{3} {y}_{4} {y}_{5} {z}_{4} {z}_{7}-{x}_{3} {y}_{4} {y}_{5} {z}_{3} {z}_{8}
  \end{array}
$$
has height $2$ and the above Betti sequence.
\end{rem}

\vspace{1cm}

{\f
{\sc (A. Ragusa) Dip. di Matematica e Informatica, Universit\`a di Catania,\\
                  Viale A. Doria 6, 95125 Catania, Italy}\par
{\it E-mail address: }{\tt ragusa@dmi.unict.it} \par
{\it Fax number: }{\f +39095330094} \par
\vspace{.3cm}
{\sc (G. Zappal\`a) Dip. di Matematica e Informatica, Universit\`a di Catania,\\
                  Viale A. Doria 6, 95125 Catania, Italy}\par
{\it E-mail address: }{\tt zappalag@dmi.unict.it} \par
{\it Fax number: }{\f +39095330094}
}

\end{document}